\providecommand{\mk}{\cellcolor{blue!15}}
\newtheorem{thm}{Theorem}[section]
\newtheorem{cor}[thm]{Corollary}
\newtheorem{lem}[thm]{Lemma}
\theoremstyle{definition}
\newtheoremstyle{break}
{\topsep}{\topsep}%
{}{}%
{\bfseries}{}%
{\newline}{}%
\theoremstyle{break}
\def\eref#1{$(\ref{#1})$}
\def\sref#1{\S$\ref{#1}$}
\def\lref#1{Lemma~$\ref{#1}$}
\def\tref#1{Theorem~$\ref{#1}$}
\def\cyref#1{Corollary~$\ref{#1}$}
\renewcommand{\=}{\equiv}
\renewcommand{\geq}{\geqslant}
\renewcommand{\leq}{\leqslant}
\renewcommand{\ge}{\geqslant}
\renewcommand{\le}{\leqslant}
\renewcommand{\emptyset}{\varnothing}
\renewcommand{\l}{\langle}
\renewcommand{\r}{\rangle}
\renewcommand{\P}{\mathcal{P}}
\def\PP{\mathbb{P}}
\def\Id{\epsilon}
\def\D{\mathcal{D}}
\def\DD{\mathbb{D}}
\def\C{\mathcal{C}}
\def\EE{\mathbb{E}}
\def\II{\mathbb{I}}
\def\E{\mathcal{E}}
\def\Z{\mathbb{Z}}
\def\K{\mathcal{K}}
\def\inc{\hookrightarrow}
\def\Der{\textnormal{Der}}
\def\fix{\textnormal{fix}}
\def\supp{\textnormal{supp}}
\def\Sym{\textnormal{Sym}}
\g@addto@macro\bfseries{\boldmath}
\title{Commuting Pairs in Quasigroups}
\author{Jack Allsop \ \ Ian M. Wanless\\
	\small School of Mathematics\\[-0.5ex]
	\small Monash University\\[-0.5ex]
	\small Vic 3800, Australia\\
	\small\tt jack.allsop@monash.edu \ \ ian.wanless@monash.edu}
\date{}
\begin{document}
	
	\maketitle
	
	\begin{abstract}
		A quasigroup is a pair $(Q, *)$ where $Q$ is a non-empty set and $*$
		is a binary operation on $Q$ such that for every $(a, b) \in Q^2$
		there exists a unique $(x, y) \in Q^2$ such that $a*x=b=y*a$. Let
		$(Q, *)$ be a quasigroup. A pair $(x, y) \in Q^2$ is a commuting
		pair of $(Q, *)$ if $x * y = y * x$. Recently, it has been shown
		that every rational number in the interval $(0, 1]$ can be attained
		as the proportion of ordered pairs that are commuting in some
		quasigroup. For every positive integer $n$ we establish the set of
		all integers $k$ such that there is a quasigroup of order $n$ with
		exactly $k$ commuting pairs. This allows us to determine, for a
		given rational $q \in (0, 1]$, the spectrum of positive integers $n$
		for which there is a quasigroup of order $n$ whose proportion of
		commuting pairs is equal to $q$.
		
		\medskip
		
		\noindent Keywords: Quasigroup, Latin square, commuting pairs.
	\end{abstract}
	
	\section{Introduction}
	
	A \emph{partial quasigroup} is a triple $(Q, P, *)$ where $Q$ is a
	set, $P \subseteq Q^2$ and $* : P \to Q$ is a map such that for each
	$(a, b) \in Q^2$ there exists at most one $x \in Q$ such that $a*x=b$
	and at most one $y \in Q$ such that $y*a=b$. A \emph{quasigroup} is a
	partial quasigroup of the form $(Q,Q^2,*)$. We will generally refer to
	the partial quasigroup $(Q, P, *)$ simply by $(Q,*)$ or by $Q$.
	The \emph{order} of $Q$ is $|Q|$.
	If $(x,y) \in P$ and $(y,x) \in P$ then
	$(x, y)$ is a \emph{commuting pair of $Q$} provided $x*y = y*x$,
	and otherwise $(x,y)$ is \emph{non-commuting}.
	Define
	\begin{equation*}\label{e:commpairs}
		\C(Q) = \big|\{(x, y) \in P : x * y = y * x\}\big|
	\end{equation*}
	to be the number of (ordered) pairs of $Q$ which commute. Suppose that $P$ has the extra property that $(x, y) \in P$ if and only if $(y, x) \in P$, for every $(x, y) \in Q^2$. Then $Q$ is \emph{commutative} if $\C(Q) = |P|$ and $Q$ is \emph{anti-commutative} if the only commuting pairs of $Q$ are those of the form $(x, x) \in P$. Assume now that $P=Q^2$ so that $Q$ is a quasigroup. It is simple to see that $\C(Q) \geq |Q|$ with equality if and only if $Q$ is anti-commutative, and $\C(Q) \leq |Q|^2$ with equality if and only if $Q$ is commutative.
	Cyclic groups provide examples of commutative quasigroups for all
	positive integer orders.  Anti-commutative quasigroups are also not
	hard to build, giving us the following folklore result.
	
	\begin{thm}\label{t:anticomm}
		There exists an commutative quasigroup of order $n$ for all positive
		integers $n$. There exists an anti-commutative quasigroup of order
		$n$ for all positive integers $n \neq 2$.
	\end{thm}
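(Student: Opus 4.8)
The commutative half is already granted in the text: the cyclic group $\Z_n$ is a commutative quasigroup of order $n$ for every positive integer $n$. So the whole content is the anti-commutative half, which I would organise as: odd orders directly, the order-$4$ case directly, and all other even orders by an inductive doubling. For odd $n$, take $Q=\Z_n$ with $x*y=x-y$; this is a quasigroup since $y\mapsto x-y$ and $x\mapsto x-y$ are bijections, and $x*y=y*x$ forces $2(x-y)=0$, hence $x=y$ because $\gcd(2,n)=1$, so $\C(Q)=n$ and $Q$ is anti-commutative. (For $n=1$ this is vacuous.) The order $n=2$ needs no construction: the only two Latin squares of order $2$ are both symmetric, so no quasigroup of order $2$ is anti-commutative, consistent with the exclusion in the statement.

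It is worth recording why a new idea is needed for even $n$: a ``linear'' map $x*y=ax+by$ on $\Z_n$ is a quasigroup only when $a,b$ are units and anti-commutative only when $a-b$ is a unit, and for even $n$ the numbers $a,b,a-b$ cannot all be odd. For the base of the even case, $n=4$, I would take $Q=\F_4$ with $x*y=x+\omega y$ where $\omega\in\F_4\setminus\{0,1\}$: it is a quasigroup since $\omega\ne0$, and $x*y=y*x$ gives $(\omega+1)(x+y)=0$, hence $x=y$ because $\omega+1$ is a nonzero element of the field. (Alternatively one can just write down a $4\times4$ Latin square agreeing with its transpose only on the diagonal.)

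For the inductive step, suppose $n=2m$ with $m\ge 3$ and that $(B,\circ)$ is an anti-commutative quasigroup of order $m$; identify the underlying set of $B$ with $\Z_m$ (whose group operation $+$ has nothing to do with $\circ$). On $Q=\Z_m\times\Z_2$ define
\[
(x,a)*(y,b)=\begin{cases}(x\circ y,\ 0)&\text{if }a=b,\\(x+y,\ 1)&\text{if }(a,b)=(0,1),\\(x+y+1,\ 1)&\text{if }(a,b)=(1,0),\end{cases}
\]
noting the second coordinate is always $a+b$ in $\Z_2$. This is a quasigroup: solving for a missing argument, its $\Z_2$-coordinate is forced, and then its $\Z_m$-coordinate is uniquely determined using that $\circ$ is a quasigroup and that each of $y\mapsto x+y$ and $y\mapsto x+y+1$ (and the analogous maps in the other variable) is a bijection. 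And it is anti-commutative: if $a=b$ then $(x,a)*(y,a)=(x\circ y,0)$ and $(y,a)*(x,a)=(y\circ x,0)$ differ whenever $x\ne y$ since $\circ$ is anti-commutative; and if $a\ne b$ then $(x,0)*(y,1)=(x+y,1)$ while $(y,1)*(x,0)=(x+y+1,1)$, and these differ because $1\ne0$ in $\Z_m$ (which uses only $m\ge2$). Thus anti-commutative quasigroups of order $2m$ exist. The induction now closes: odd $n$ and $n=4$ are done; for even $n\ge 6$ put $m=n/2$, so $3\le m<n$ and $m\ne2$, whence an anti-commutative quasigroup of order $m$ exists (by the odd construction if $m$ is odd, by the inductive hypothesis otherwise) and doubling it gives order $n$.

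The main obstacle is designing this doubling --- specifically, handling the ``mixed'' products where the two $\Z_2$-coordinates differ so that $(x,0)*(y,1)$ and $(y,1)*(x,0)$ are forced apart for every pair $(x,y)$ at once, which the uniform shift by $1$ accomplishes (and which is exactly why the single value $m=2$, i.e.\ $n=4$, falls outside the scheme and must be built by hand). Once the formula is in place, the only part that requires real care is checking that the array is genuinely a quasigroup rather than merely verifying the anti-commutativity count $\C(Q)=|Q|$.
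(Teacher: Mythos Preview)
Your argument is correct. The odd case via $x*y=x-y$ on $\Z_n$, the $n=4$ case via the affine map $x+\omega y$ on $\F_4$, and the doubling construction for even $n\ge6$ all check out; in particular your four-block operation on $\Z_m\times\Z_2$ really is a quasigroup (each row and column splits into two bijections onto $\Z_m\times\{0\}$ and $\Z_m\times\{1\}$), and the shift by $1$ in the $(1,0)$-block kills all mixed commuting pairs as long as $m\ge2$.

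The paper takes a different and less elementary route: it cites the Brayton--Coppersmith--Hoffman theorem that self-orthogonal Latin squares exist for all orders $n\notin\{2,3,6\}$, observes (\lref{l:solsantisymm}) that any self-orthogonal quasigroup is anti-commutative, and then plugs the gaps at $n=3$ and $n=6$ with the explicit anti-symmetric Latin squares displayed in \eref{e:antisymm36}. So the paper's proof is a one-line appeal to a substantial external existence result plus two ad hoc examples, whereas yours is a fully self-contained recursive construction. Your approach buys elementarity and explicitness at the cost of a little more writing; the paper's buys brevity at the cost of importing a theorem whose proof is considerably harder than the statement being proved. It is also worth noting that your doubling is close in spirit to the construction the paper later uses in the proof of \tref{t:antisymmholenew} for the case $m=n/2$, where an anti-commutative quasigroup of order $m$ is inflated to an anti-commutative member of $\Omega(2m,m)$ by a similar $2\times2$ block scheme (with a derangement playing the role of your shift by $1$).
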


	Let $Q$ be a quasigroup of order $n$ and define 
	\[
	\P(Q) = \frac{\C(Q)}{n^2}
	\]
	to be the proportion of (ordered) pairs in $Q^2$ which commute. Hall~\cite{Hall} introduced the property $\P(Q)$ in the case where $Q$ is a finite group. Since then many authors, including Erd\H{o}s and Tur\'{a}n~\cite{erdtur}, have studied this same quantity and many interesting results have been proved (see, e.g.,~\cite{commlb, commprob, pgupp}). There have also been some natural generalisations of the proportion of commuting pairs to infinite groups (see, e.g.,~\cite{commcomp1, comminf}), rings (see, e.g.,~\cite{commrings2, commrings1}) and semigroups~\cite{semidense, semiMacHale, semirat, semisingle}. See~\cite{commhis} for a brief history on the study of commuting pairs in algebraic objects. Recently, Lycan~\cite{quasrat} showed that $\{\P(Q) : Q \text{ is a finite quasigroup}\} = (0, 1] \cap \mathbb{Q}$. That is, the proportion of commuting pairs in a quasigroup can achieve any rational number in the interval $(0, 1]$. The analogous result for semigroups is also known to hold~\cite{semirat}. However, for groups $\P(Q)$ is much more restricted. A well known theorem of Gustafson~\cite{pgupp} says that if $Q$ is a non-abelian group then $\P(Q) \leq 5/8$. Lycan's result shows that no analogous result to Gustafson's holds for quasigroups. The situation for more structured varieties of quasigroups, such as Moufang loops, remains open. It will not be addressed in the present work, although it seems a worthy question for future research
	
	Let $n$ be a positive integer and define
	\[
	\C(n) = \bigcup_{Q} \C(Q),
	\]
	where the union runs over all quasigroups $Q$ of order $n$. Also define
	\[\D(n) = \{n, n+2, n+4, \ldots, n^2-6\} \cup \{n^2\}.\]
	It is not hard to see that $\C(n) \subseteq \D(n)$ for every positive integer $n$ (see \lref{l:commvalid}). If $\C(n) = \D(n)$ then we say that $n$ is \emph{saturated}. We prove the following theorem.
	
	\begin{thm}\label{t:commpairs}
		Let $n$ be a positive integer. If $n=4$ then $\C(n) = \D(n) \setminus \{10\}$, if $n=5$ then $\C(n) = \D(n) \setminus \{17\}$, and if $n \not\in \{4, 5\}$ then $n$ is saturated.
	\end{thm}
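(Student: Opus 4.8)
Since $\C(n)\subseteq\D(n)$ is \lref{l:commvalid}, the work is to realise each element of $\D(n)$ (outside the two exceptions) as $\C(Q)$. The plan is to argue with Latin squares. A quasigroup $Q$ of order $n$ is the Cayley table $L$ of a Latin square on the symbol set $Q$, with $L_{ij}=i*j$, and $(i,j)$ commutes precisely when $L_{ij}=L_{ji}$, i.e.\ when the cell $(i,j)$ is symmetric. The diagonal cells are always symmetric and the non-symmetric ones occur in transposed pairs, so $n^2-\C(Q)=2e(Q)$ where $e(Q)$ counts the unordered non-commuting pairs of $Q$, i.e.\ the edges of the non-commuting graph of $Q$. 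Translating $\D(n)$ through $\C=n^2-2e$, the claim becomes: every $e\in\{0\}\cup\{3,4,\dots,\binom n2\}$ is attained by a quasigroup of order $n$ when $n\notin\{4,5\}$, while $e=3$ is not attained for $n=4$ and $e=4$ is not attained for $n=5$.

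For the constructions I would fix a commutative Latin square $L_0$ of order $n$ and use two local moves. \emph{Subsquare substitution}: if $L_0$ has a principal subsquare on a $k$-set $A$, overwrite it by the Cayley table of any quasigroup $S$ of order $k$; only the cells of $A\times A$ change and they keep their symbols inside $A$, so the result is a quasigroup whose non-commuting pairs are exactly those of $S$, giving $e=e(S)$. With $S$ the order-$3$ anti-commutative quasigroup of \tref{t:anticomm} this gives $e=3$, and with $k=4$ and a hand-built order-$4$ quasigroup having five non-commuting pairs it gives $e=5$. \emph{Diagonal-avoiding trade}: if $T$ is a Latin trade in $L_0$ lying off the main diagonal with $T\cap T^{\top}=\varnothing$, then performing $T$ makes exactly the $|T|$ cells of $T$ and their $|T|$ transposes non-symmetric and leaves every other cell symmetric, so $e=|T|$. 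Latin trades exist of every size in $\{4\}\cup\{6,7,8,\dots\}$; taking disjoint copies of the size-$4$ (intercalate) trade and of the $e=3$ substitution realises every $e$ of the form $4a+3b$, hence every $e\notin\{1,2,5\}$ up to a bound linear in $n$, while for the large values one instead starts from an anti-commutative quasigroup of order $n$ (where $e=\binom n2$) and applies repeated intercalate switches, each turning one non-commuting pair into a commuting one, to descend step by step. The remaining small orders $n\le 7$ I would dispose of directly, which is where the exceptions surface: order $4$ has no quasigroup with exactly three non-commuting pairs and order $5$ none with exactly four.

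The hard part is the middle of the interval: linking the ``linear-range'' values built from small disjoint patches to the values near $\binom n2$ obtained from the anti-commutative square. This reduces to producing, for every admissible $s$ with $4\le s\le\binom n2$, a Latin square of order $n$ that differs from a fixed commutative one in exactly $s$ strictly-upper-triangular cells --- equivalently a large diagonal-avoiding, transpose-disjoint Latin trade of each such size, assembled from triangular sub-blocks of an explicit commutative base (cyclic, elementary abelian, or a product) plus small auxiliary trades to bridge the gaps between block sizes --- together with checking that the intercalate-switch step survives repeated application. I expect that packing/interpolation argument to be the technical core. The one other delicate ingredient is the exceptional analysis at $n=4,5$: excluding $\C=10$ when $n=4$ and $\C=17$ when $n=5$ goes beyond \lref{l:commvalid} and calls for an exhaustive, but finite and short, case check over the possible non-commuting graphs on three, respectively four, edges.
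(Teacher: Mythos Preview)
Your route via local trades is genuinely different from the paper's: there the construction is recursive, building partial quasigroups with a hole of order $m\le\lfloor n/2\rfloor$ and a prescribed number of commuting pairs outside the hole (via diagonally cyclic constructions and row-permutations of commutative partials, the latter controlled by a probabilistic lemma on derangements), and then pasting an order-$m$ quasigroup into the hole. Cycle switching enters only for a handful of values near $n^2$ and for the base cases $n\le10$.

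Your proposal, however, has gaps beyond the ``hard part'' you flag. The descent from the anti-commutative end is not sound as stated: an intercalate switch alters four cells at once, and its effect on $e$ depends on the entries in the four transposed cells $(c_j,r_i)$, so there is no reason it should decrease $e$ by exactly one, nor any argument that a switch with that precise effect is always available (anti-commutative squares need not contain conveniently placed intercalates at all). On the small-$e$ side, your constructions assume structural features of $L_0$ that you have not secured: subsquare substitution needs a symmetric order-$k$ subsquare on the diagonal, but the cyclic group of prime order has no proper subquasigroups, and the cyclic square of odd order contains no intercalates, so neither $e=3$ nor $e=4$ is reachable by your moves for odd prime $n$ with that base. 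Most seriously, the middle-range interpolation you identify as the core is left as a hope: producing a transpose-disjoint, off-diagonal trade of \emph{every} size up to $\binom n2$ in a fixed commutative base is essentially the substance of the theorem, and ``assembled from triangular sub-blocks \dots\ plus small auxiliary trades'' is not an argument. The paper's hole-pasting recursion sidesteps all of this by never needing global trades and by reducing to $n\le10$, where (as you correctly anticipate) the two exceptions are settled by exhaustive check.
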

	
	\tref{t:commpairs} says that for any positive integer $n$, and any integer $k \in \D(n)$, there is a quasigroup of order $n$ with exactly $k$ commuting pairs, unless $(n, k) \in \{(4, 10), (5, 17)\}$.
	Let $q \in (0, 1] \cap \mathbb{Q}$ and define 
	$\K(q)$ to be the set of positive integers $n$ for which
	there is a quasigroup $Q$ of order $n$ with $\P(Q)=q$.
	Lycan~\cite{quasrat} suggested the problem of determining $\min(\K(q))$ for each $q \in (0, 1] \cap \mathbb{Q}$. Note that $\K(1)$ is the set of positive integers since commutative quasigroups exist for every order. Our second main theorem gives a complete description of $\K(q)$ for each $q \in (0, 1) \cap \mathbb{Q}$.
	
	\begin{thm}\label{t:Kq}
		Let $q=a/b \in (0, 1) \cap \mathbb{Q}$ where $a$ and $b$ are coprime, and let $k$ be the smallest positive integer such that $kb$ is a square. Define 
		\[
		\mathcal{S} = \Big\{x(kb)^{1/2} : x \in \Z \text{ and } x \geq \max\left\{\left\lceil b^{1/2}/(ak^{1/2}) \right\rceil, \left\lceil 6^{1/2}/(kb-ka)^{1/2}\right\rceil\right\}\Big\},
		\]
		if $a \equiv b \bmod 2$ or $k$ is even, and define 
		\[
		\mathcal{S} = \Big\{x(kb)^{1/2} : x \in \Z \text{ is even and } x \geq \max\left\{\left\lceil b^{1/2}/(ak^{1/2}) \right\rceil, \left\lceil6^{1/2}/(kb-ka)^{1/2}\right\rceil\right\}\Big\}
		\] 
		otherwise. Then $\K(q) = \mathcal{S}$ up to these exceptions:
		\begin{itemize}
			\item If $q = 5/8$ then $\K(q) = \mathcal{S} \setminus \{4\}$,
			\item If $q = 17/25$ then $\K(q) = \mathcal{S} \setminus \{5\}$.
		\end{itemize}
	\end{thm}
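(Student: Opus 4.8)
The plan is to deduce \tref{t:Kq} from \tref{t:commpairs} by converting the constraint $\P(Q)=q$ into a constraint on the integer $\C(Q)$. The first step is a number-theoretic reduction: if $Q$ is a quasigroup of order $n$ with $\P(Q)=a/b$, then $\C(Q)=(a/b)n^2$ must be an integer, so $b\mid an^2$ and hence $b\mid n^2$ since $\gcd(a,b)=1$; conversely, a prime-by-prime valuation check shows $b\mid n^2$ if and only if $(kb)^{1/2}\mid n$. Indeed, by the minimality of $k$ the $p$-adic valuation $v_p(k)\in\{0,1\}$ records the parity of $v_p(b)$, so $v_p\big((kb)^{1/2}\big)=\lceil v_p(b)/2\rceil$, which is exactly the least integer that can serve as $v_p(n)$ subject to $v_p(b)\le 2v_p(n)$. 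Hence the orders $n$ that are candidates for membership in $\K(q)$ are precisely those of the form $n=x(kb)^{1/2}$ with $x$ a positive integer, and for such $n$ one obtains the convenient identities $n^2=kbx^2$ and $\C(Q)=qn^2=akx^2$.

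The second step is to determine, for $n=x(kb)^{1/2}$, exactly when $akx^2$ lies in $\D(n)=\{n,n+2,\ldots,n^2-6\}\cup\{n^2\}$. Since $q<1$ we have $akx^2=qn^2<n^2$, so the isolated value $n^2$ is irrelevant and we need $akx^2$ to lie in the arithmetic progression. The requirement $akx^2\ge n$ simplifies to $x\ge b^{1/2}/(ak^{1/2})$, the requirement $akx^2\le n^2-6$ simplifies to $x\ge 6^{1/2}/(kb-ka)^{1/2}$ (note $kb-ka=k(b-a)>0$ as $a<b$), and together these give the lower bound on $x$ appearing in $\mathcal{S}$. For the congruence $akx^2\equiv n\pmod 2$, write $n=(kb)^{1/2}x$ and use $(kb)^{1/2}\equiv kb\pmod 2$: when $x$ is even the congruence is automatic, while when $x$ is odd it reduces to $ak\equiv kb\pmod 2$, i.e.\ $k$ even or $a\equiv b\pmod 2$. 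This is exactly the dichotomy separating the two definitions of $\mathcal{S}$, and so I will have shown that for $n=x(kb)^{1/2}$ one has $qn^2\in\D(n)$ if and only if $n\in\mathcal{S}$.

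The third step invokes \lref{l:commvalid} and \tref{t:commpairs}. For necessity, if $n\in\K(q)$ with witnessing quasigroup $Q$, then $\C(Q)=qn^2\in\C(n)\subseteq\D(n)$, so $n\in\mathcal{S}$ by the previous step; thus $\K(q)\subseteq\mathcal{S}$. For sufficiency, if $n\in\mathcal{S}$ and $n\notin\{4,5\}$, then $n$ is saturated by \tref{t:commpairs}, so $qn^2\in\D(n)=\C(n)$ and there is a quasigroup $Q$ of order $n$ with $\C(Q)=qn^2$, giving $n\in\K(q)$. It remains to treat $n\in\{4,5\}$ by hand using $\C(4)=\D(4)\setminus\{10\}$ and $\C(5)=\D(5)\setminus\{17\}$: the order $4$ belongs to $\K(q)$ precisely when $4\in\mathcal{S}$, except that $16q=10$ (that is, $q=5/8$) forces $4\notin\K(q)$; similarly $5\in\K(q)$ precisely when $5\in\mathcal{S}$, except that $25q=17$ (that is, $q=17/25$) forces $5\notin\K(q)$. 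A short check confirms $4\in\mathcal{S}$ when $q=5/8$ and $5\in\mathcal{S}$ when $q=17/25$, and that neither of these orders lies in $\mathcal{S}$ for the other's value of $q$, so these two deletions are exactly the stated exceptions.

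I expect the only real care to be needed in the bookkeeping: getting the valuation argument for $(kb)^{1/2}\mid n$ and the $\bmod\,2$ case split exactly right, and verifying that no order in $\{4,5\}$ is spuriously added to or removed from $\mathcal{S}$ for values of $q$ other than $5/8$ and $17/25$. With \tref{t:commpairs} in hand, everything else is routine rearrangement of inequalities.
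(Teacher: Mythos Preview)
Your proposal is correct and follows essentially the same approach as the paper: both arguments reduce membership in $\K(q)$ to the form $n=x(kb)^{1/2}$ via a divisibility/valuation argument, translate the conditions $qn^2\in\D(n)$ into the parity and inequality constraints defining $\mathcal{S}$, and then invoke \tref{t:commpairs} (together with \lref{l:commvalid}) to pass between $\D(n)$ and $\C(n)$, handling the two small exceptional orders by hand. Your write-up is slightly more explicit about the valuation argument and the mod~$2$ reduction than the paper's, but the structure and ideas are the same.
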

	
	The solution to Lycan's problem follows immediately from \tref{t:Kq}.
	
	The structure of this paper is as follows. In \sref{s:LS} we give some necessary background information on quasigroups, and their combinatorial equivalents, Latin squares. In \sref{s:prelim} we prove some preliminary results regarding quasigroups which help us to prove \tref{t:commpairs} and \tref{t:Kq}, which we do in \sref{s:main}.
	
	\section{Background}\label{s:LS}
	
	Let $Z$ be a set. The set of all permutations of $Z$ is denoted by $\Sym(Z)$. The set of fixed points of a permutation $\alpha \in \Sym(Z)$ is denoted by $\fix(\alpha)$ and $\supp(\alpha) = Z \setminus \fix(\alpha)$ is the set of points moved by $\alpha$. If $\fix(\alpha)=\emptyset$ then $\alpha$ is a \emph{derangement} and the set of all derangements of $Z$ is denoted by $\Der(Z)$.
	
	Let $(Q, *)$ be a quasigroup.
	The opposite quasigroup of $Q$ is the quasigroup $(Q, \circ)$ defined by $i \circ j = j*i$. Let $W \subseteq Q$. If $(W, *|_W)$ is itself a quasigroup then it is a \emph{subquasigroup} of $Q$.
	
	Let $(Q, P, *)$ and $(Q', P', \circ)$ be partial quasigroups. If there exists bijective maps $\alpha, \beta, \gamma : Q \to Q'$ such that $P' = \{(\alpha(x), \beta(y)) : (x, y) \in P\}$ and $\alpha(x) \circ \beta(y) = \gamma(x*y)$ for every $(x, y) \in P$ then $Q$ and $Q'$ are \emph{isotopic} and $(\alpha, \beta, \gamma)$ is an \emph{isotopism} from $Q$ to $Q'$. If $(Q, *) = (Q', \circ)$ then $(\alpha, \beta, \gamma)$ is an \emph{autotopism} of $Q$. If $\alpha = \beta = \gamma = \psi$ for some $\psi \in \Sym(Q)$ then $\psi$ is an \emph{automorphism} of $Q$. We will be particularly interested in isotopic partial quasigroups $Q$ and $Q'$ where $(\alpha, \Id, \Id)$ is an isotopism from $Q$ to $Q'$. In this case we denote $Q'$ by $Q\l\alpha\r$.
	
	Let $n$ be a positive integer. A \emph{Latin square} of order $n$ is an $n \times n$ matrix of $n$ symbols, each of which occur exactly once in each row and column. We will always assume that the rows and columns of a Latin square are indexed by its symbol set. This allows us to think of Latin squares and finite quasigroups as equivalent objects, in the sense that the Cayley table of a quasigroup is a Latin square and every Latin square is the Cayley table of some finite quasigroup. Let $L$ be a Latin square with symbol set $S$. A pair $(i, j) \in S^2$ is a commuting pair of $L$ if $L_{ij} = L_{ji}$. If all pairs in $S^2$ are commuting pairs of $L$ then $L$ is \emph{symmetric} and if the only pairs in $S^2$ which are commuting pairs of $L$ are those of the form $(s, s)$ then $L$ is \emph{anti-symmetric}. Symmetric Latin squares are equivalent to commutative quasigroups and anti-symmetric Latin squares are equivalent to anti-commutative quasigroups.  
	
	Let $(Q, P, *)$ be a partial quasigroup and suppose that there is some set $H \subseteq Q$ such that $P = Q^2 \setminus H^2$ and $i*j \in H$ implies that $(i, j) \in (Q \setminus H)^2$. Then $Q$ is a \emph{partial quasigroup with hole $H$}. For positive integers $n$ and $m$ we define the set of all partial quasigroups of order $n$ with a hole of order $m$ by $\Omega(n, m)$.
	
	Let $Q \in \Omega(n, m)$ have hole $H$. Let $Q' = (H, \circ)$ be a quasigroup. We can define a quasigroup $(Q' \inc Q, \cdot)$ by
	\[
	i \cdot j = \begin{cases}
		i*j & \text{if } (i, j) \in Q^2 \setminus H^2, \\
		i \circ j & \text{if } (i, j) \in H^2. 
	\end{cases}
	\]
	We say that $Q' \inc Q$ has been built by pasting $Q'$ into the hole of $Q$.
	
	The following theorem, which is a direct consequence of work of Cruse~\cite{symmhole}, proves existence of many commutative quasigroups with holes.
	
	\begin{thm}\label{t:symmholeeven}
		Let $n$ and $m \leq \lfloor n/2 \rfloor$ be positive integers. Also assume that $m$ is odd if $n$ is odd. There exists a commutative member of $\Omega(n, m)$.
	\end{thm}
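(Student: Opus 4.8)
The plan is to translate the statement into a question about symmetric Latin squares and then invoke Cruse's embedding theorem. First I would record the equivalence: there is a commutative member of $\Omega(n,m)$ with hole $H$ (where $|H|=m$) if and only if there is a symmetric Latin square of order $n$ possessing a symmetric subsquare on the common index/symbol set $H$. For the direction actually needed, suppose $R$ is a symmetric Latin square of order $n$ with a symmetric subsquare on $H$. Deleting the cells of $H\times H$ leaves a partial quasigroup on $Q=\{1,\dots,n\}$ with $P=Q^2\setminus H^2$; since each row (resp.\ column) of the subsquare already contains every symbol of $H$, no symbol of $H$ can appear outside $H\times H$, so $R_{ij}\in H$ forces $(i,j)\in(Q\setminus H)^2$, making this a member of $\Omega(n,m)$, and it is commutative because $R$ is symmetric and $H^2$ is symmetric. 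Conversely, pasting a commutative quasigroup of order $m$ (which exists by \tref{t:anticomm}) into the hole of a commutative member of $\Omega(n,m)$ produces exactly such an $R$, so the two existence problems coincide.

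Next I would check that the hypotheses are precisely the obstructions. If $n<2m$, then a symbol not in $H$, which is confined to the bottom-right $(n-m)\times(n-m)$ block once its $2m$ forced appearances in the off-diagonal blocks are removed, would need $n-2m<0$ further appearances; hence $m\le\lfloor n/2\rfloor$ is necessary. For the parity condition, note that in any symmetric Latin square of order $n$ each symbol occurs on the main diagonal a number of times congruent to $n\bmod 2$, because off-diagonal occurrences come in transpose pairs. Thus when $n$ is odd each symbol occurs at least once on the diagonal, so the diagonal is a transversal; restricting to the symmetric subsquare of order $m$ forces its diagonal to be a transversal as well, which is impossible for a symmetric Latin square of even order (every symbol there has even diagonal multiplicity). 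Hence $m$ must be odd when $n$ is odd.

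For sufficiency I would appeal to Cruse's embedding theorem \cite{symmhole}, which characterises when a symmetric Latin square $S$ of order $m$ (with symbol $i$ appearing $d_i$ times on its diagonal) embeds in a symmetric Latin square of order $n$ on the same row, column and symbol set: the conditions amount to $n\ge 2m$ together with the $d_i$ satisfying the size and parity constraints isolated in the previous paragraph. It then suffices to exhibit one suitable $S$ of order $m$. I would take $S$ to be the Cayley table of $\Z_m$, which is symmetric; when $m$ is odd its diagonal $\{\,2i\bmod m:0\le i<m\,\}$ is a transversal, exactly the feature demanded in the $n$ odd case. Checking Cruse's conditions for this $S$ collapses them to $n\ge 2m$ and ``$n$ even or $m$ odd'', i.e.\ the hypotheses of the theorem; the resulting embedding gives a symmetric Latin square of order $n$ with a symmetric subsquare of order $m$, and hence, by the first paragraph, the required commutative member of $\Omega(n,m)$.

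The step I expect to be the main obstacle is the bookkeeping in the last paragraph: extracting the precise form of Cruse's conditions and verifying that, for $S$ the Cayley table of $\Z_m$, they reduce cleanly to ``$m\le\lfloor n/2\rfloor$ and ($n$ even or $m$ odd)'', while keeping track of the parity cases — in particular $n$ even with $m$ odd, where no cyclic-group subsquare of order $m$ exists and one genuinely needs Cruse's theorem rather than a direct product construction.
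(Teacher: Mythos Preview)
Your proposal is correct and follows the same route as the paper, which does not give a standalone proof but simply records the result as a direct consequence of Cruse's embedding theorem \cite{symmhole}; you have merely supplied the translation to symmetric Latin squares and the explicit seed (the Cayley table of $\Z_m$) that makes the citation effective. One small wording slip: ``no symbol of $H$ can appear outside $H\times H$'' should read ``no symbol of $H$ can appear in the off-diagonal blocks $H\times(Q\setminus H)$ or $(Q\setminus H)\times H$'', which is precisely what you use in the next clause to verify the $\Omega(n,m)$ condition.
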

	
	Two quasigroups $(Q, *)$ and $(Q, \circ)$ are \emph{orthogonal} if $\big|\{(i*j, i \circ j) : (i, j) \in Q^2\}\big| = |Q|^2$. Equivalently, two quasigroups are orthogonal if their Cayley tables are orthogonal as Latin squares.
	Let $(Q, *)$ be a quasigroup. If $Q$ is orthogonal to its opposite quasigroup then $Q$ is \emph{self-orthogonal}. We are interested in self-orthogonal quasigroups because of the following simple observation, which is part of the folklore of the subject. 
	
	\begin{lem}\label{l:solsantisymm}
		Any self-orthogonal quasigroup is anti-commutative.
	\end{lem}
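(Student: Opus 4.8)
The plan is to derive anti-commutativity directly from the injectivity half of the orthogonality condition. Write $(Q,\circ)$ for the opposite quasigroup of $(Q,*)$, so that $i\circ j = j*i$ for all $i,j\in Q$. Saying that $Q$ is self-orthogonal means precisely that the map $\Phi:Q^2\to Q^2$ defined by $\Phi(i,j)=(i*j,\,i\circ j)=(i*j,\,j*i)$ has image of size $|Q|^2$, hence is a bijection; in particular it is injective.

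I would then take an arbitrary commuting pair $(x,y)$, i.e.\ one with $x*y=y*x$, and observe that it has the same image under $\Phi$ as the reversed pair $(y,x)$: indeed
\[
\Phi(x,y)=(x*y,\,y*x)=(y*x,\,x*y)=\Phi(y,x).
\]
Injectivity of $\Phi$ forces $(x,y)=(y,x)$, and therefore $x=y$. Thus the only commuting pairs of $Q$ are the diagonal pairs $(x,x)$, which is exactly the definition of anti-commutativity given in the introduction. (The diagonal pairs are automatically commuting, so no converse needs checking — it suffices to have established the implication ``$(x,y)$ commuting $\Rightarrow x=y$''.)

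There is essentially no obstacle here: the lemma is a two-line consequence of unpacking the definitions of ``orthogonal'' and ``opposite quasigroup''. The only point that deserves a moment's care is lining up the notion of ``anti-commutative'' with the one fixed in the introduction, so that the single implication above is genuinely all that is required.
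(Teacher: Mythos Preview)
Your argument is correct and is exactly the natural proof: injectivity of $(i,j)\mapsto(i*j,j*i)$ forces any commuting pair to be diagonal. The paper does not actually supply a proof of this lemma, describing it instead as ``part of the folklore of the subject'', so there is nothing further to compare against.
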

	
	The converse to \lref{l:solsantisymm} does not hold. For example, the anti-symmetric Latin squares in \eref{e:antisymm36} are not self-orthogonal.	
	\begin{equation}\label{e:antisymm36}
		\begin{minipage}{.45\textwidth}
			\begin{equation*}
				\begin{pmatrix}
					0&1&2\\
					2&0&1\\
					1&2&0
				\end{pmatrix}
			\end{equation*}
		\end{minipage}
		\begin{minipage}{.45\textwidth}
			\begin{equation*}
				\begin{pmatrix}
					2&1&4&0&5&3\\
					0&2&5&2&4&1\\
					1&4&3&5&2&0\\
					4&5&1&3&0&2\\
					3&2&0&4&1&5\\
					5&0&2&1&3&4\\
				\end{pmatrix}
			\end{equation*}
		\end{minipage}
	\end{equation}
	
	Brayton, Coppersmith and Hoffman~\cite{sols} showed that a self-orthogonal quasigroup of order $n$ exists for all positive integers $n \not\in \{2, 3, 6\}$. Combining this fact with \lref{l:solsantisymm} and \eref{e:antisymm36} is one way to prove \tref{t:anticomm}.
	
	Let $n$ and $m$ be positive integers and let $Q \in \Omega(n, m)$ have hole $H$. Then $Q$ is self-orthogonal if $\big|\{(i*j, j*i) : (i, j) \in Q^2 \setminus H^2\}\big| = |Q^2 \setminus H^2|$.
	The following theorem~\cite{solshole5} provides us with an existence result for self-orthogonal quasigroups with holes.
	
	\begin{thm}\label{t:solshole}
		Let $n$ and $m \leq \lfloor (n-1)/3 \rfloor$ be positive integers with
		\begin{equation}\label{e:nmpairs}
			(n, m) \not\in \big\{(6, 1), (8, 2), (20, 6), (26, 8), (32, 10)\big\}.
		\end{equation}
		There exists a self-orthogonal member of $\Omega(n, m)$.
	\end{thm}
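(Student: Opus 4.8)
The object to be constructed is what the literature on combinatorial designs calls an \emph{incomplete self-orthogonal Latin square}: an order-$n$ array, Latin off a common set of $m$ rows and columns (the hole), whose $n^2-m^2$ filled cells $(i,j)$ carry pairwise distinct ordered pairs $(i*j,\,j*i)$. The plan is the standard two-stage strategy for existence problems of this kind: assemble a small stock of explicit ingredients, use recursive constructions to deduce everything else from them, and settle by hand the short list of genuinely awkward parameters. First I would note that the hypothesis $m\le\lfloor(n-1)/3\rfloor$, i.e.\ $n\ge 3m+1$, is the natural necessary condition here --- it falls out of accounting for where the $m$ hole symbols, subject also to the transpose constraint, are forced to appear in the array. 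The smallest holes are essentially free: the case $m=1$ reduces to the existence of suitable self-orthogonal Latin squares (Brayton--Coppersmith--Hoffman, quoted above), and the order-$6$ failure there is exactly why $(6,1)$ must be excluded.

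Second, I would set up the recursive machinery, whose two workhorses are \emph{inflation} and \emph{filling frames}. Inflation is a product-type construction: starting from an incomplete self-orthogonal Latin square of order $n_1$ with a hole of order $m$ and a self-orthogonal Latin square of order $t$, one expands each filled cell so as to obtain an incomplete self-orthogonal Latin square of order $m+t(n_1-m)$ with a hole of the same order $m$. Filling frames works from a self-orthogonal frame of type $g^u$ --- a self-orthogonal array on $gu$ points with $u$ disjoint holes of size $g$ partitioning the point set --- into each of whose $u$ holes one pastes a copy of an incomplete self-orthogonal Latin square of order $g+m$ with a hole of order $m$, all sharing a common set of $m$ extra points, producing an incomplete self-orthogonal Latin square of order $gu+m$ with a hole of order $m$. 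Self-orthogonal frames of the types needed are available, by direct constructions together with standard truncation and product constructions from sets of mutually orthogonal Latin squares, for a broad range of parameters. A routine closure argument --- the admissible values of $n$, for a fixed small $m$, are closed under the relevant product constructions --- then shows that once a finite ``seed'' list of orders is in hand for each residue class of $m$ modulo a small modulus, every pair $(n,m)$ with $n$ large enough is covered. Handling $m$ even and $m$ odd separately, or a finer split of $m$ modulo $6$, is what makes the frame types line up.

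Third come the direct constructions for the seeds and for the cases near the boundary. For a prime power $q$ one starts from a linear rule $i*j=\alpha i+\beta j$ on $\mathbb{F}_q$ with $\alpha$, $\beta$, $\alpha-\beta$ and $\alpha+\beta$ all nonzero --- which already makes the square self-orthogonal --- and then alters a bordered region to install the hole; the remaining small orders are dealt with by explicit arrays or by starter--adder constructions on $\Z_{n-m}$ with the hole placed on the last $m$ symbols. Finally the claimed exceptions are disposed of. For the two genuine ones, $(6,1)$ and $(8,2)$, one proves non-existence directly: the first reduces to the classical (Euler--Tarry) non-existence of a pair of orthogonal Latin squares of order $6$, and the second to a finite (if fiddly) case analysis of how the two hole symbols can sit in the $6\times6$ corner. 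For $(20,6)$, $(26,8)$ and $(32,10)$ --- all of the form $n=3m+2$ --- one simply records that they fall just outside the reach of the recursive constructions and that no substitute construction is known, so they are left out.

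The hard part is this last layer. Once $n$ is comfortably larger than $3m$, the frames and inflations do everything automatically; but near the boundary $n\in\{3m+1,3m+2,3m+3\}$ there is essentially no slack, so neither inflation nor frame-filling gains anything, and each such pair must be settled by a bespoke direct construction or a computer search, with each genuine exception demanding an honest non-existence proof. Organising the seed list so that, class by class, every admissible $(n,m)$ below the threshold is actually covered --- with the only survivors being the five listed pairs --- is where nearly all the effort goes.
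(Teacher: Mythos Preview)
The paper does not prove this theorem at all: it is quoted as a known result from the literature (the citation \texttt{solshole5}), and is used only as a black box to derive Corollary~2.6 and ultimately Theorem~2.7. So there is no ``paper's own proof'' to compare against.

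Your sketch is a reasonable outline of the standard design-theoretic approach to results of this type --- frames, inflation, direct seed constructions over finite fields or $\Z_{n-m}$, and ad hoc treatment near the boundary $n\approx 3m$ --- and is broadly consistent with how the cited literature handles incomplete self-orthogonal Latin squares. But it is a plan, not a proof: the actual work lives in producing the specific frame types, the explicit seed arrays, and the case analysis for the boundary pairs, none of which you have carried out. In the context of this paper that is fine, since the authors themselves simply invoke the result; if you were writing this up you would likewise cite the original source rather than reprove it.
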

	
	The exceptions $(n, m) \in \{(6, 1), (8, 2)\}$ in \tref{t:solshole} are the only exceptions known to be genuine. Combining \tref{t:solshole} with \lref{l:solsantisymm} we obtain the following.
	
	\begin{cor}\label{c:antisymmhole}
		Let $n$ and $m \leq \lfloor (n-1)/3 \rfloor$ be positive integers satisfying \eref{e:nmpairs}. There exists an anti-commutative member of $\Omega(n, m)$.
	\end{cor}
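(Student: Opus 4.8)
The plan is to deduce \cyref{c:antisymmhole} directly from \tref{t:solshole}, by extending the observation in \lref{l:solsantisymm} from quasigroups to partial quasigroups with a hole. Concretely, I would first apply \tref{t:solshole} to obtain a self-orthogonal partial quasigroup $(Q, P, *) \in \Omega(n, m)$ with hole $H$, so that $P = Q^2 \setminus H^2$ and $\big|\{(i*j, j*i) : (i, j) \in P\}\big| = |P|$. Before using this, note that $P$ is symmetric: if $(i, j) \in P$ then $\{i, j\} \not\subseteq H$, so $(j, i) \notin H^2$ and hence $(j, i) \in P$. Thus the notion of anti-commutativity is meaningful for $Q$, and it suffices to show that no pair $(i, j) \in P$ with $i \neq j$ commutes.

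Suppose to the contrary that $(i, j) \in P$, $i \neq j$, and $i * j = j * i$. Since $P$ is symmetric, $(i, j)$ and $(j, i)$ are two distinct elements of $P$, yet the map $\phi : P \to Q^2$ given by $\phi(a, b) = (a * b, b * a)$ sends both of them to $(i * j, i * j)$. Hence $|\phi(P)| < |P|$, contradicting self-orthogonality of $Q$. Therefore the only commuting pairs of $Q$ are the diagonal pairs $(x, x) \in P$ (equivalently, those with $x \in Q \setminus H$), so $Q$ is an anti-commutative member of $\Omega(n, m)$, as required.

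There is essentially no obstacle here: the argument is just the proof of \lref{l:solsantisymm} carried out in the partial setting. The only thing to watch is the hole bookkeeping — checking that $P$ is symmetric so that anti-commutativity is well defined, and noting that the diagonal pairs $(x, x)$ with $x \in H$ lie outside $P$ and so are, correctly, not required to be non-commuting by the definition of an anti-commutative partial quasigroup.
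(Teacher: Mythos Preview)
Your proposal is correct and follows exactly the route the paper indicates: the paper simply says the corollary is obtained by ``combining \tref{t:solshole} with \lref{l:solsantisymm},'' and you have spelled out the one-line adaptation of \lref{l:solsantisymm} to the partial-quasigroup setting. The extra bookkeeping you do about the symmetry of $P$ is sound and makes explicit what the paper leaves implicit.
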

	
	In \sref{s:prelim} we extend \cyref{c:antisymmhole} by proving the
	following theorem.
	
	\begin{thm}\label{t:antisymmholenew}
		Let $n \geq 3$ and $m \leq \lfloor n/2 \rfloor$ be
		positive integers, with $(n, m) \neq (4, 2)$.  There
		exists an anti-commutative member of $\Omega(n, m)$.
	\end{thm}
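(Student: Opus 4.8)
The plan is to handle $(n,m)$ in cases according to the size of $m$, invoking \cyref{c:antisymmhole} where it applies and supplying explicit constructions elsewhere. When $m\le\lfloor(n-1)/3\rfloor$ and $(n,m)$ is not one of the five pairs barred in \eref{e:nmpairs}, \cyref{c:antisymmhole} already produces an anti-commutative member of $\Omega(n,m)$, so there is nothing to prove. The five pairs $(6,1),(8,2),(20,6),(26,8),(32,10)$, together with the handful of small pairs (for instance $(3,1)$) that the general constructions below do not reach, I would settle individually by writing down explicit partial quasigroups; these orders are small enough to handle directly or by computer.

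Next I would treat the extremal case $n=2m$, which lies outside the reach of \cyref{c:antisymmhole} but is the easiest to do by hand. Write $Q=R\sqcup H$ with $|R|=|H|=m$ and hole $H$. Since entries of $H$ may appear only in the $R\times R$ block and every row and column is forced to be as full as possible, the structure to build is: a Latin square on the symbol set $H$ in the $R\times R$ block, a Latin square on the symbol set $R$ in each of the $R\times H$ and $H\times R$ blocks, and nothing in the $H\times H$ block. So I would take an anti-symmetric Latin square $X$ of order $m$ (which exists for all $m\neq2$, by \tref{t:anticomm}), put $i*j=X_{ij}$ on $R\times R$, and fill the two remaining blocks with Latin squares $Y,Z$ of order $m$ on $R$ chosen so that $Y_{ij}\neq Z_{ji}$ for all $i,j$ — for instance the Cayley table of $\Z_m$ and that table shifted by $1$. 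A short check shows this is an anti-commutative partial quasigroup with hole $H$; the sole obstruction is $m=2$, i.e.\ the genuinely excluded pair $(4,2)$.

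For the intermediate range $\lfloor(n-1)/3\rfloor<m<n/2$ I would argue by prolongation. Put $r=n-m$, so that $(r-1)/2\le m\le r-1$, and suppose first that $r$ is odd. Begin with the Cayley table $L_0$ of $\Z_r$ on $R=\Z_r$; each permutation matrix $T_d=\{(i,i+d):i\in\Z_r\}$ is a transversal of $L_0$, fixed-point-free exactly when $d\neq0$. Pick $D\subseteq\Z_r\setminus\{0\}$ with $|D|=m$ and $D\cup(-D)=\Z_r\setminus\{0\}$ — possible precisely because $(r-1)/2\le m\le r-1$ — and prolong $L_0$ simultaneously along the pairwise-disjoint transversals $\{T_d:d\in D\}$, adjoining the $m$ elements of $H$ as new symbols, rows and columns while leaving the $H\times H$ block empty. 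The partial-quasigroup axioms and the hole condition are immediate. Anti-commutativity inside the $R\times R$ block holds because $D\cup(-D)$ meets every nonzero difference (so at least one cell of each off-diagonal symmetric pair lies on some $T_d$) and because $2d\neq0$ when $d\neq0$ (so the two entries differ, being an element of $H$ versus one of $R$, or $h_d$ versus $h_{-d}$); anti-commutativity across the new rows and columns reduces to the same inequality $2d\neq0$ for $d\in D$. Together with the two earlier cases this covers every $(n,m)$ for which $n-m$ is odd.

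The hard part is the intermediate range with $r=n-m$ even, which occurs for infinitely many pairs: then $\Z_r$ has no transversal at all, so a different starting square is needed. I would look for a Latin square of order $r$ carrying a family of $m$ pairwise-disjoint, fixed-point-free, symmetric-pair-covering transversals satisfying the analogue of $2d\neq0$ — obtained, say, from a prolongation of the Cayley table of $\Z_{r-1}$ (whose transversals descend in a controlled way) or from a self-orthogonal Latin square after repairing its fixed-point collisions by local swaps. Establishing the existence of such a square, with at most a few small orders checked by hand, is the technical heart of the proof. It then remains to confirm that the cases above exhaust all $(n,m)$ with $n\ge3$, $m\le\lfloor n/2\rfloor$ and $(n,m)\neq(4,2)$.
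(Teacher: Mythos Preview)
Your overall decomposition matches the paper's: invoke \cyref{c:antisymmhole} whenever it applies, dispatch the five exceptional pairs from \eref{e:nmpairs} by explicit construction, and treat $n=2m$ separately with a block construction built from an anti-commutative quasigroup of order $m$ together with a fixed-point-free shift on the cross blocks. Your $n=2m$ argument is essentially the paper's (which uses an arbitrary derangement $\sigma$ where you use the shift by $1$).

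Where you diverge is the intermediate range $\lfloor(n-1)/3\rfloor<m<n/2$. For $r=n-m$ odd your prolongation of $\Z_r$ is correct and pleasant, but for $r$ even you have only a sketch --- and you yourself flag this as ``the technical heart of the proof''. That is a genuine gap: this case contains infinitely many pairs $(n,m)$ (for instance $(11,5),(12,4),(13,5),(14,6),\ldots$), and neither ``prolongation of $\Z_{r-1}$'' nor ``repair a self-orthogonal square by local swaps'' comes with an argument. The paper closes this gap uniformly, without splitting on the parity of $r$, by building a \emph{diagonally cyclic partial quasigroup} on $\Z_k\cup F$ with $k=n-m$: one specifies the first row via an explicit partial orthomorphism of $\Z_k$ of deficit $m$ (the map $g$ of \eref{e:defg}), extends by the $k$-cycle automorphism $\psi$, and then applies \lref{l:newcons} with $j=0$ to conclude that the resulting partial quasigroup has exactly $n-m$ commuting pairs, i.e.\ is anti-commutative. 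The key observation is that the partial orthomorphism $x\mapsto\lceil k/2\rceil-x-1$, restricted to a window of length $k-m\le k/2$, remains injective on differences even when $k$ is even, so no parity hypothesis is needed. Your odd-$r$ prolongation can be recast in this same diagonally cyclic language, so the fix is not a change of direction but replacing the Cayley table of $\Z_r$ by a DCPQ whose defining partial orthomorphism is chosen to work for all $k$.
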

	
	Let $L$ be a Latin square with symbol set $S$ of cardinality $n$, and let $i$ and $j$ be distinct elements of $S$. The permutation mapping row $i$ of $L$ to row $j$ of $L$, denoted by $r_{ij}$, is defined by $r_{ij}(L_{ik}) = L_{jk}$ for every $k \in S$. Such permutations are called \emph{row permutations} of $L$. Let $\rho$ be a cycle in $r_{ij}$ and in row $i$ (or row $j$) let the set of columns containing the symbols involved in $\rho$ be $C$. The set of entries whose cells are in $\{i, j\} \times C$ is called a \emph{row cycle} of $L$. The \emph{length} of this row cycle is $|C|$. This row cycle is uniquely determined by the rows $\{i, j\}$ and a single column in $C$. Thus we denote this row cycle by $\rho(i, j, c)$ for any $c \in C$.
	Row cycles give us a way to create a new Latin square by perturbing an old one, in a method known as cycle
	switching \cite{cycswitch}. 
	Let $\rho(i, j, c)$ be a row cycle of $L$ which involves the entries in cells $\{i, j\} \times C$ for some $C \subseteq S$. A new Latin square $L'$ can be defined by
	\[
	L'_{xy} = \begin{cases}
		L_{iy} & \text{if } x=j \text{ and } y \in C, \\
		L_{jy} & \text{if } x=i \text{ and } y \in C, \\
		L_{xy} & \text{otherwise}.
	\end{cases}
	\]
	We will say that $L'$ has been obtained from $L$ by \emph{switching}
	on $\rho(i, j, c)$. 
	
	Let $n$ be a positive integer. If $n=4$ then define $\mathbb{D}(n) = \{4, 6, 8, 16\}$, if $n=5$ then define $\mathbb{D}(n) = \{5, 7, 9, 11, 13, 15, 19, 25\}$, otherwise define $\mathbb{D}(n) = \D(n)$. To prove \tref{t:commpairs} we must construct a quasigroup, or Latin square, of order $n$ with exactly $k$ commuting pairs for each positive integer $n$ and each $k \in \DD(n)$. To do this we will use two different constructions. For the first construction, we paste a quasigroup into a partial quasigroup with a hole and a known number of commuting pairs. For the second construction, we begin with a Latin square with a known number of commuting pairs and use cycle switching to create Latin squares with different numbers of commuting pairs.	
	
	\section{Preliminary results}\label{s:prelim}
	
	In this section we construct some partial quasigroups that will be
	useful later. For a positive integer $n$ let $\Z_n$ denote the set $\{0, 1, \ldots, n-1\}$ under addition modulo $n$. For convenience, if $x \in \Z_n$ and $y \not\in \Z_n$ then we set $x+y=y$. Let $m$ and $n$ be integers satisfying $\lceil
	n/3\rceil\leq m < n/2$, and let $k=n-m$. Define $Q=\Z_k\cup F$ where $F$ is a set
	satisfying $|F|=m$ and $F\cap\Z_k=\emptyset$. We will define
	quasigroups on $Q$ which have the $k$-cycle
	$\psi=(0,1,\dots,k-1)\in\Sym(Q)$ as an automorphism.  The structure of
	such quasigroups is well understood (see, for example,
	\cite{diagcyc}). They all contain a subquasigroup on $F$. Since this
	subquasigroup can easily be replaced by any other subquasigroup on
	$F$, we will leave it as a hole, to be filled in later.  We call the
	remainder (outside the hole) a \emph{diagonally cyclic partial
		quasigroup} (DCPQ).  
	
	A \emph{partial orthomorphism of $\Z_k$ with deficit $d$} is an
	injective map $\theta:S\rightarrow\Z_k$ where $S\subseteq\Z_k$ and
	$|S|=k-d$, for which the map $\phi:S\rightarrow\Z_k$
	defined by $\phi(x)=\theta(x)-x$ is also injective. As noted in \cite{diagcyc},
	in order to define a DCPQ it suffices to define $0*x$ for $x\in Q$ and
	$x*0$ for $x\in F$ satisfying the following conditions:
	\begin{enumerate}[\text{Condition} $1$:]
		\item Let $S=\{x\in\Z_k:0*x\in\Z_k\}$.
		The map $\theta:S\rightarrow\Z_k$ defined by $x\mapsto0*x$
		is a partial orthomorphism of $\Z_k$ with deficit $d=m$.
		\item $\{0*x:x\in F\}=\Z_k\setminus\{0*x:x\in S\}$.
		\item $\{x*0:x\in F\}=\Z_k\setminus\{0*y-y:y\in S\}$.
	\end{enumerate}
	
	We now define a family of DCPQs on $Q$. Let $F = \{f_0, f_1, \ldots, f_{m-1}\}$. Define a map $g : Q \to Q$ as follows. If $x \in \Z_k$ then 
	\begin{equation}\label{e:defg}
		g(x) = \begin{cases}
			f_x & \text{if } x \in \{0, 1, \ldots, \lceil k/2 \rceil-1\}, \\
			\lceil k/2 \rceil -x-1 & \text{if } x \in \{\lceil k/2 \rceil, \lceil k/2 \rceil+1, \ldots, \lceil k/2 \rceil+k-m-1\},\\
			f_{x-k+m} & \text{otherwise}.
		\end{cases}
	\end{equation}
	If $i \in \{0, 1, \ldots, m-1\}$ then define $g(f_i) = i$.
	Let
	\begin{equation}\label{e:AB}
		A = \{g(x)-x : x \in \Z_k\} \cap \Z_k \text{ and } B = \Z_k \setminus A. 
	\end{equation}
	Let $b = (b_0, b_1, \ldots, b_{m-1})$ be any permutation of the elements of $B$. We define a map $* = *_b : (\{0\} \times Q) \cup (F \times \{0\}) \to Q$ by $0*x = g(x)$ for every $x \in Q$ and $f_i*0 = b_i$ for every $i \in \{0, 1, \ldots, m-1\}$. It is simple to check that Conditions $1 \textendash 3$ are satisfied and so $*$ can be extended to form a DCPQ on $Q$. Denote this DCPQ by $Q(b)$.
	In the following two lemmas we count the number of commuting pairs of $Q(b)$.
	
	\begin{lem}\label{l:newcon}
		Let $n$ and $m$ be positive integers satisfying $\lceil n/3 \rceil \leq m < n/2$. Let $k=n-m$ and let $B$ be defined as in \eref{e:AB}. Let $b = (b_0, b_1, \ldots, b_{m-1})$ be a permutation of the elements of $B$. Then $\C(Q(b)) = k(2\ell+1)$ where $\ell$ is the number of solutions to $f_i*0 = 0*f_i$. 
	\end{lem}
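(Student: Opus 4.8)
The plan is to identify exactly which commuting pairs occur in the DCPQ $Q(b)$ by exploiting the automorphism $\psi=(0,1,\dots,k-1)$. Since $\psi$ is an automorphism, the relation ``$(x,y)$ commutes'' is invariant under simultaneously applying $\psi$ to both coordinates: $x*y=y*x$ if and only if $\psi(x)*\psi(y)=\psi(y)*\psi(x)$. Hence the set of commuting pairs is a union of orbits under the diagonal action of $\langle\psi\rangle$ on $(Q^2\setminus F^2)$. The key first step is to classify these orbits and their sizes. Because $\psi$ fixes every element of $F$ and permutes $\Z_k$ in a single $k$-cycle, a pair $(x,y)$ with at least one coordinate in $\Z_k$ has an orbit of size exactly $k$ (the orbit of $(x,y)$ where, say, $x\in\Z_k$ is $\{(x+t,\,y+t):t\in\Z_k\}$, using the convention $x+y=y$ when $y\notin\Z_k$, and these are all distinct since the first coordinate runs through all of $\Z_k$). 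So every commuting pair outside $F^2$ contributes a full orbit of size $k$, and $\C(Q(b))$ is $k$ times the number of such orbits.

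Next I would enumerate the orbits of commuting pairs. First, the ``diagonal'' orbit $\{(x,x):x\in\Z_k\}$ always commutes, contributing one orbit of size $k$; this gives the $+1$ in the formula $k(2\ell+1)$. Second, I claim that no pair $(x,y)$ with $x,y\in\Z_k$ and $x\neq y$ can commute: by diagonal cyclic invariance it suffices to check pairs of the form $(0,j)$ with $j\neq 0$, and $0*j=g(j)$ while $j*0=\psi^j(0*0)\cdots$ — more precisely, one uses the standard description of diagonally cyclic quasigroups to write $j*0$ in terms of $\theta$ and $\phi$, and the fact that $\theta=g|_S$ is a partial orthomorphism (Condition~1) forces $0*j\neq j*0$ for $j\neq 0$; I would spell out this short computation using that $g(x)-x$ takes each value in $A$ at most once. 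Third, the remaining potentially commuting pairs are those involving an element of $F$: pairs $(f_i,\,c)$ and $(c,\,f_i)$ with $c\in\Z_k$, or pairs in $F^2$ — but $F^2\notin P$ since $F$ is the hole, so those are excluded from $\C(Q(b))$ altogether. For a fixed $i$, the orbit of $(0,f_i)$ under the diagonal $\psi$-action is $\{(t,f_i):t\in\Z_k\}$ and that of $(f_i,0)$ is $\{(f_i,t):t\in\Z_k\}$; these two orbits are ``reflections'' of one another, and $(0,f_i)$ commutes (i.e.\ $0*f_i=f_i*0$) if and only if every pair in the first orbit commutes if and only if every pair in the second orbit commutes, because $(t,f_i)$ commutes iff $t*f_i=f_i*t$ iff (applying $\psi^{-t}$) $0*f_i=f_i*0$. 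Wait — one must check that $f_i*t$ and $t*f_i$ transform correctly under $\psi^{-t}$; since $\psi$ is an automorphism and $\psi(f_i)=f_i$, indeed $\psi^{-t}(f_i*t)=f_i*0$ and $\psi^{-t}(t*f_i)=0*f_i$, so this equivalence is immediate.

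Putting this together: the commuting pairs outside $F^2$ consist of the single diagonal orbit, plus, for each $i$ such that $0*f_i=f_i*0$, the two orbits $\{(t,f_i):t\in\Z_k\}$ and $\{(f_i,t):t\in\Z_k\}$ — noting these two are genuinely distinct orbits since one has all first coordinates in $F$ and the other all second coordinates in $F$. Hence if $\ell=|\{i:0*f_i=f_i*0\}|$ is the number of solutions to $f_i*0=0*f_i$, the total count of commuting pairs is $k\cdot 1 + \ell\cdot(k+k) = k(2\ell+1)$, as claimed. The main obstacle I anticipate is the second step: cleanly ruling out commuting pairs $(x,y)$ with $x,y\in\Z_k$ distinct. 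This is where the partial-orthomorphism hypothesis (Condition~1) does real work, and it requires writing $x*y$ for general $x,y\in\Z_k$ in the diagonally cyclic quasigroup — using $x*y = \psi^x(0*(y-x))$ when $y-x\in S$, and handling the case $y-x\notin S$ (i.e.\ $0*(y-x)\in F$) separately — and then comparing with $y*x$ via the $\phi$-injectivity. Everything else is a bookkeeping exercise in orbit counting once the automorphism symmetry is set up.
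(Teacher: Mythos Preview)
Your orbit-counting framework via the automorphism $\psi$ is sound, and your treatment of the diagonal orbit and of the $\Z_k\times F$, $F\times\Z_k$ pairs matches the paper exactly. The gap is precisely in the step you yourself flag as the main obstacle: ruling out commuting pairs $(i,j)$ with $i,j\in\Z_k$ and $i\ne j$.

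The partial orthomorphism property (Condition~1) is not enough for this. Reducing via $\psi$, one needs $0*d\ne d*0$ for every nonzero $d\in\Z_k$; when both $d$ and $-d$ lie in $S$ this reads $\theta(d)\ne d+\theta(-d)$, i.e.\ $\phi(d)\ne\theta(-d)$, and neither the injectivity of $\theta$ nor that of $\phi$ forbids such a coincidence. Concretely, on $\Z_7$ with $S=\{1,\dots,5\}$ the assignment $\theta(1)=6$, $\theta(2)=3$, $\theta(3)=2$, $\theta(4)=4$, $\theta(5)=1$ is a partial orthomorphism, yet $\phi(2)=1=\theta(5)=\theta(-2)$, so the DCPQ built from it has $0*2=3=2*0$. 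What the paper actually uses is the specific shape of $g$ in \eref{e:defg}: the set $S$ on which $g$ lands in $\Z_k$ begins at $\lceil k/2\rceil$, and the elementary observation that $x\ge\lceil k/2\rceil$ forces $k-x\le\lfloor k/2\rfloor$ then shows that $d$ and $-d$ can both lie in $S$ only when $k$ is even and $d=k/2$; this residual case is eliminated by noting that commutativity would give $i*(i+k/2)=\psi^{k/2}\bigl(i*(i+k/2)\bigr)$, impossible for an element of $\Z_k$. A separate short argument (injectivity of $g$ on its $F$-valued part, together with $g(k/2)=-1\notin F$) disposes of the case $0*(j-i)\in F$. Your plan goes through once you replace the appeal to ``$\phi$-injectivity'' by these two concrete features of the particular map $g$.
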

	
	\begin{proof}
		Let $(i, j) \in \Z_k^2$ be a commuting pair of $Q = Q(b)$ with $i
		\neq j$.  Suppose first that $0*(j-i)\in F$. Since
		$j*i=i*j=0*(j-i)\in F$ it follows $0*(i-j)\in F$ and
		$0*(i-j)=0*(j-i)$, which implies that $i-j=j-i$. Therefore $k$ is
		even and $j-i=k/2$. But then $0*(k/2)=-1\notin F$, which is a
		contradiction. Therefore, $0*(j-i)\notin F$, and
		similarly $0*(i-j)\notin F$. Thus, by \eref{e:defg},
		\begin{equation}\label{e:i-jj-i}
			\{i-j, j-i\} \subseteq \{\lceil
			k/2 \rceil, \lceil k/2 \rceil+1, \ldots, \lceil k/2 \rceil+k-m-1\}.
		\end{equation}
		Now, in $\Z$ the inequality $x\ge\lceil k/2\rceil$ implies that
		$k-x\le k-\lceil k/2\rceil=\lfloor k/2\rfloor$. So the only way
		to satisfy \eref{e:i-jj-i} is if $k$ is even and $i-j=j-i=k/2$.
		However, that is impossible since it would mean that
		$$i*(i+k/2)\=(i+k/2)*i\=\psi^{k/2}(i)*\psi^{k/2}(i+k/2)\=\psi^{k/2}(i*(i+k/2))\=i*(i+k/2)+k/2$$
		mod $k$.
		It follows that the
		number of commuting pairs of $Q$ which lie in $\Z_k^2$ is $k$. It
		remains to prove that there are $\ell k$ commuting pairs of $Q$ in
		$\Z_k \times F$. Let $(i, j) \in \Z_k \times F$. Since $\psi$ is an
		automorphism of $Q$ we see that
		\[
		i*j = j*i \iff i+0*j = i+j*0 \iff 0*j=j*0.
		\]
		The lemma follows.
	\end{proof}
	
	\begin{lem}\label{l:newcons}
		Let $n$ and $m$ be positive integers with $\lceil n/3 \rceil \leq m < n/2$. Let $k=n-m$ and define $s = \max\{2m-k, m-1-\lfloor (k-2)/4 \rfloor\}$.	For each $j \in \{0, 1, \ldots, s\}$ there exists some $Q \in \Omega(n, m)$ with $C(Q) = (n-m)(2j+1)$.
	\end{lem}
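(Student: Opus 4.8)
The plan is to realise every target value using the partial quasigroups $Q(b)$ built above. By \lref{l:newcon}, $\C(Q(b)) = k(2\ell+1)$, where $\ell$ counts the indices $i \in \{0, 1, \ldots, m-1\}$ with $f_i * 0 = 0 * f_i$. Since $0 * f_i = g(f_i) = i$ and $f_i * 0 = b_i$, this $\ell$ is the number of $i$ with $b_i = i$, i.e. the number of fixed points of the bijection $i \mapsto b_i$ from $\{0, 1, \ldots, m-1\}$ onto $B$. So it suffices to show that for each $j \in \{0, 1, \ldots, s\}$ there is a permutation $b$ of the elements of $B$ with exactly $j$ fixed points.

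First I would pin down $A$ and $B$. Unwinding \eref{e:defg} and \eref{e:AB}, one checks that $g(x) - x \notin \Z_k$ unless $x$ lies in the middle range $S = \{\lceil k/2\rceil, \ldots, \lceil k/2\rceil + k - m - 1\}$, and that for such $x$ we have $g(x) - x \equiv \lfloor k/2\rfloor - 1 - 2(x - \lceil k/2\rceil) \pmod{k}$; hence
\[
A = \{\lfloor k/2\rfloor - 1 - 2i \bmod k : i = 0, 1, \ldots, k - m - 1\},
\]
an arithmetic progression of length $k - m$ and common difference $2$ in $\Z_k$, and $|B| = m$. The key observation is that, read inside $\{0, 1, \ldots, k-1\}$, this progression is a union of at most two descending runs of step $2$: the first begins at $\lfloor k/2\rfloor - 1$ and, for as long as it lasts, stays within $\{0, 1, \ldots, \lfloor k/2\rfloor - 1\} \subseteq \{0, 1, \ldots, m-1\}$ (using $m \geq \lceil k/2\rceil$, which follows from $2m \geq k$); and the second, if present, has least element $2m + 1 - \lceil k/2\rceil \geq m + 1$, so it lies within $\{m, m+1, \ldots, k-1\}$ and never goes negative, which rules out a third run. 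Hence $|A \cap \{0, 1, \ldots, m-1\}| = \min\{k - m,\, \lceil\lfloor k/2\rfloor/2\rceil\}$, and since $\lceil\lfloor k/2\rfloor/2\rceil = 1 + \lfloor(k-2)/4\rfloor$, it follows that $t := |B \cap \{0, 1, \ldots, m-1\}| = m - \min\{k-m,\, 1 + \lfloor(k-2)/4\rfloor\} = \max\{2m-k,\, m - 1 - \lfloor(k-2)/4\rfloor\} = s$. I expect this count, in particular making the ``at most two runs'' claim and the small-$k$ boundary cases precise, to be the main obstacle.

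It remains to realise every $j \in \{0, 1, \ldots, t\}$; since $\lfloor k/2\rfloor - 1 \in A \cap \{0, 1, \ldots, m-1\}$ we have $t \leq m - 1$. Write $B \cap \{0, 1, \ldots, m-1\} = \{c_1 < \cdots < c_t\}$ and let $d_1, \ldots, d_{m-t}$ be the remaining elements of $B$, each of which is at least $m$. For $j < t$ with $t - j \geq 2$, I would set $b_{c_1} = c_1, \ldots, b_{c_j} = c_j$, let $b$ act on $c_{j+1}, \ldots, c_t$ as the cyclic shift $c_{j+1} \mapsto c_{j+2} \mapsto \cdots \mapsto c_t \mapsto c_{j+1}$, and send the remaining $m - t$ positions to $d_1, \ldots, d_{m-t}$ in any order; since each $d_i \geq m$ exceeds every position index, $b$ has exactly the fixed points $c_1, \ldots, c_j$, so $\ell = j$. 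The case $j = t$ is obtained by fixing every $c_i$, and the degenerate case $j = t-1$ by instead setting $b_{c_t} = d_1$ and placing $c_t$ on one of the $m - t \geq 1$ leftover positions (none of which equals $c_t$, as those positions lie outside $B$). In every case $Q(b) \in \Omega(n, m)$ and, by \lref{l:newcon}, $\C(Q(b)) = k(2j+1) = (n - m)(2j+1)$; together with $s \leq t$ this proves the lemma.
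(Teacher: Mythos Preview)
Your proposal is correct and follows essentially the same approach as the paper. Both arguments compute $|B\cap\{0,\ldots,m-1\}|=s$ by showing that the arithmetic progression $\lfloor k/2\rfloor-1-2i$ wraps around at most once in $\Z_k$ (you phrase this as ``at most two runs'', the paper as the bounds $m-k\le\lfloor k/2\rfloor-1-2i\le m$ in $\Z$), and then choose the ordering $b$ to realise exactly $j$ coincidences $b_i=i$; your case analysis for the latter is more explicit than the paper's, which simply notes that $s<m$ suffices.
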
	
	
	\begin{proof}
		Define $A$ and $B$ as in \eref{e:AB}. We first show that $|B \cap \{0, 1, \ldots, m-1\}| = s$. Since $B = \Z_k \setminus A$ it follows that $|B \cap \{0, 1, \ldots, m-1\}| = m-|A \cap \{0, 1, \ldots, m-1\}|$. Given that $k-\lceil k/2\rceil=\lfloor k/2 \rfloor$, we have
		\[
		A = \{\lfloor k/2 \rfloor -1-2j\in\Z_k: j \in \{0, 1, \ldots, k-m-1\}\}.
		\]
		Inequalities in the remainder of the proof will be in $\Z$.
		First, we claim that $\lfloor k/2 \rfloor -1-2j \leq m$, for all $j \in \{0, 1, \ldots, k-m-1\}$. If not, then we must have $j < (\lfloor k/2 \rfloor-m-1)/2$. However, $m \geq \lceil n/3 \rceil\geq k/2$, so $(\lfloor k/2 \rfloor-m-1)/2<0$, a contradiction. Second, we claim that $\lfloor k/2 \rfloor -1-2j \geq m-k$ for all $j \in \{0, 1, \ldots, k-m-1\}$. If not then $j>(k+\lfloor k/2 \rfloor -m-1)/2$. However $(k+\lfloor k/2 \rfloor -m-1)/2>k-m-1$ since $m \geq k/2$, which gives a contradiction. It follows that if $\lfloor k/2 \rfloor -1-2j\in\{0, 1, \ldots, m-1\}$ in $\Z_k$ then $\lfloor k/2 \rfloor -1-2j\in\{0, 1, \ldots, m-1\}$ in $\Z$. Let $j' \in \{0, 1, \ldots, k-m-1\}$ be maximal such that $\lfloor k/2 \rfloor -1-2j' \geq 0$. A simple computation shows that $j' = \lfloor (\lfloor k/2 \rfloor -1)/2 \rfloor = \lfloor (k-2)/4 \rfloor$. Thus $\lfloor k/2 \rfloor -1-2j\in \{0, 1, \ldots, m-1\}$ in $\Z_k$ if and only if $j \in \{0, 1, \ldots, \min\{k-m-1, \lfloor (k-2)/4 \rfloor\}\}$. So $|B \cap \{0, 1, \ldots, m-1\}| = m-\min\{k-m, \lfloor (k-2)/4 \rfloor+1\} = \max\{2m-k, m-1-\lfloor (k-2)/4 \rfloor\}=s$, as claimed.
		
		Next we claim that $s<m$. Clearly $s \leq m$. If $s=m$ then either $k-m=0$ or $\lfloor (k-2)/4 \rfloor+1=0$. The former implies that $m=n/2$ and the latter implies that $k \leq 1$, both of which are false.
		
		Let $j \in \{0, 1, \ldots, s\}$. Note that either $j=s$ or $j\le m-2$,
		because $s<m$. Hence, we can choose a permutation
		$b=(b_0,b_1,\ldots,b_{m-1})$ of the elements of $B$ to produce exactly
		$j$ solutions to $b_i = 0*f_i$.
		The lemma now follows from \lref{l:newcon}.
	\end{proof}
	
	Our next task in this section is to prove
	\tref{t:antisymmholenew}. The first step in this direction is to prove
	that there is an anti-symmetric member of $\Omega(n, m)$ for the cases
	excluded by \eref{e:nmpairs}.
	Note that \eref{e:antisymm36} gives an anti-symmetric Latin square of
	order $6$, which deals with the exception $(n, m) = (6, 1)$. For each
	of the remaining exceptions, \lref{l:newcon} does not apply, since
	$m=(n-2)/3$.
	
	Let $n$ and $m$ be positive integers such that $(n, m) \in \{(8, 2), (20, 6), (26, 8), (32, 10)\}$, and let $k=n-m$. Let $F = \{f_0, f_1, \ldots, f_{m-1}\}$ be a set of order $m$, disjoint from $\Z_k$, and let $Q = \Z_k \cup F$. Define a function $h : Q \to Q$ as follows. If $x \in \Z_k$ then 
	\[
	h(x) = \begin{cases}
		f_x & \text{if } x \in \{0, 1, \ldots, m-1\}, \\
		1 & \text{if } x = m, \\
		m-x & \text{otherwise}. 
	\end{cases}
	\]
	If $x \in F$ then define $h(x)$ in any way such that $\{h(x) : x \in F\} = \Z_k \setminus \{h(x) : x \in \Z_k\}$. Define $h' : F \to Q$ such that $\{h'(x) : x \in F\} = \Z_k \setminus \{h(x)-x : x \in \Z_k\}$ and $h'(x) \neq h(x)$ for all $x \in F$. Define a map $* : (\{0\} \times Q) \cup \{F \times \{0\}\} \to Q$ by $0*x = h(x)$ for every $x \in Q$ and $x*0 = h'(x)$ for every $x \in F$. It is easy to verify that conditions $1\textendash3$ are satisfied, so $*$ extends to define a DCPQ on $Q$. Furthermore, by construction this DCPQ is anti-commutative. We are now ready to prove \tref{t:antisymmholenew}.
	
	\begin{proof}[Proof of \tref{t:antisymmholenew}]
		If $(n, m) \in \{(6, 1), (8, 2), (20, 6), (26, 8), (32, 10)\}$ then the result is true, as proven above. If $m \leq \lfloor (n-1)/3 \rfloor$ and \eref{e:nmpairs} holds then the result is true by \cyref{c:antisymmhole}. Otherwise if $m \neq n/2$ then the result is true by \lref{l:newcons} with $j=0$. It remains to deal with the case where $n>4$ is even and $m=n/2$.
		
		Let $n>4$ be an even integer and let $m=n/2$. Let $(Q, *)$ be an anti-commutative quasigroup of order $m$ and let $\sigma \in \Der(Q)$. Define $Q' = Q \times \{1, 2\}$ and $P =(Q')^2 \setminus (Q \times \{2\})^2$. Define a map $\circ : P \to Q'$ by
		\[
		(i, x)\circ(j, y) = \begin{cases}
			(i*j,1) & \text{if } (x, y) = (1, 1), \\
			(\sigma(i*j),2) & \text{if } (x, y) = (1, 2), \\
			(j*i,2) & \text{if } (x, y) = (2, 1).
		\end{cases}
		\]
		It is easy to see that $(Q', P, \circ) \in \Omega(n, m)$. Since $Q$ is anti-commutative, it follows that $Q'$ has no commuting pairs $((i, x), (j, y))$ with $(x, y) = (1, 1)$ and $i \neq j$. If $Q'$ had a commuting pair $((i, x), (j, y))$ with $(x, y) = (1, 2)$ then $\sigma(i*j) = i*j$, which contradicts the fact that $\sigma$ is a derangement. Hence $Q'$ is anti-commutative.
	\end{proof}
	
	For the remainder of this section we construct partial
	quasigroups with holes and a known number of commuting pairs.  Our
	method will be to permute rows in commutative partial quasigroups with
	holes.
	
	\begin{lem}\label{l:symmpermutehole}
		Let $n$ and $m \leq \lfloor n/2 \rfloor$ be positive integers. Also
		assume that $m$ is odd if $n$ is odd. For every
		$j\in\{2,3,\ldots,m\}$ there exists some $Q \in \Omega(n, m)$ with
		$\C(Q)=(n+m-2j)(n-m)$.
	\end{lem}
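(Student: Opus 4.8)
The plan is to start from a commutative partial quasigroup with the required hole and then break commutativity in a controlled way by permuting the rows indexed by the hole.

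By \tref{t:symmholeeven} there is a commutative $Q_0 = (Q, P, *) \in \Omega(n, m)$; let $H$ be its hole, $R = Q \setminus H$, and let $L$ be the Cayley table of $Q_0$, so that $L_{xy} = L_{yx}$ whenever $(x,y) \in P$. Fix $j \in \{2, 3, \ldots, m\}$ and put $t = m - j$, so that $t \in \{0, 1, \ldots, m-2\}$. Since an $r$-element set admits a fixed-point-free permutation for every $r \neq 1$, and here $m - t \in \{2, 3, \ldots, m\}$, we may choose $\tau \in \Sym(H)$ with $|\fix(\tau)| = t$ (a derangement of an $(m-t)$-subset of $H$, extended so as to fix the remaining $t$ points of $H$). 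Extend $\tau$ to $\hat\tau \in \Sym(Q)$ by fixing $R$ pointwise, and set $Q := Q_0\l\hat\tau^{-1}\r$. Concretely $Q = (Q, P, \circ)$, where $x \circ y = \tau(x) * y$ for $x \in H$ and $x \circ y = x * y$ for $x \in R$; that is, $Q$ is obtained from $L$ by replacing row $x$ with row $\tau(x)$ for each $x \in H$. Write $L'$ for the Cayley table of $Q$.

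The first thing to verify is that $Q \in \Omega(n, m)$ with hole $H$. The set of filled cells is unchanged because $\hat\tau$ preserves both $H$ and $R$. For the defining property of a hole one must check that $x \circ y \in H$ forces $(x,y) \in R^2$; for $x \in R$ this is inherited from $Q_0$, and for $x \in H$ it holds because in a commutative member of $\Omega(n,m)$ every entry $L_{x'y'}$ with $(x',y') \in H \times R$ lies in $R$ (apply the hole condition of $Q_0$ to the symmetric cell $(y', x')$), so rows of $L$ indexed by $H$ take all their values in $R$; hence $x \circ y \in H$ is impossible when $x \in H$.

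The count of commuting pairs is then obtained by splitting $P$ into the three blocks $R \times R$, $R \times H$ and $H \times R$. On $R \times R$ nothing has changed, so all $(n-m)^2$ of these pairs (the diagonal included) commute. For $(x,y) \in R \times H$ we have $L'_{xy} = L_{xy}$ while $L'_{yx} = L_{\tau(y), x} = L_{x, \tau(y)}$, and since the filled entries of row $x$ of $L$ are distinct, $(x,y)$ commutes iff $\tau(y) = y$; symmetrically, $(x,y) \in H \times R$ commutes iff $\tau(x) = x$, using distinctness of the filled entries of column $y$ of $L$. Therefore each of the $t$ fixed points of $\tau$ contributes $n-m$ commuting pairs in $R \times H$ and $n-m$ more in $H \times R$, so that
\[
\C(Q) = (n-m)^2 + 2t(n-m) = (n-m)(n-m+2t) = (n+m-2j)(n-m),
\]
as required. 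The only slightly delicate points are the two verifications just sketched — the hole condition for the row-permuted table, and the three-case commuting count — both of which are routine; the remaining ingredient, attainability of the prescribed fixed-point count $t = m-j$, is immediate because $m - t \geq 2$.
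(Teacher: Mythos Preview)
Your proof is correct and follows essentially the same approach as the paper: start from a commutative member of $\Omega(n,m)$ supplied by \tref{t:symmholeeven}, apply a row permutation fixing $Q\setminus H$ pointwise with exactly $m-j$ fixed points inside $H$, and count commuting pairs block by block. The only differences are cosmetic: you add an explicit check that the row-permuted object still lies in $\Omega(n,m)$ with hole $H$ (which the paper leaves implicit), and you justify the existence of a permutation of $H$ with the prescribed fixed-point count, whereas the paper simply asserts such an $\alpha$ exists.
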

	
	\begin{proof}
		Let $Q' \in \Omega(n, m)$ be symmetric with hole $H$, which exists by \tref{t:symmholeeven}. Let $I = Q' \setminus H$. Let $\alpha \in \Sym(Q')$ be such that $I \subseteq \fix(\alpha)$ and $|\fix(\alpha) \cap H| = m-j$. Let $Q = Q'\l\alpha^{-1}\r$ so that $Q' = Q\l\alpha\r$. Denote the binary operation of $Q$ by $\circ$. We claim that $Q$ has $(n+m-2j)(n-m)$ commuting pairs. First, consider $(x, y) \in I \times H$ such that $x \circ y = y \circ x$. Then $\alpha(x) * y = \alpha(y) * x$. Since $x \in I$ it follows that $\alpha(x)=x$ and so $x * y = \alpha(y) * x$. Since $Q'$ is commutative it follows that $\alpha(y)=y$. Therefore the number of commuting pairs of $Q$ which lie in $I \times H$ is $|I||\fix(\alpha) \cap H| = (n-m)(m-j)$. Similarly there are $(n-m)(m-j)$ commuting pairs of $Q$ lying in $H \times I$. Also, since $I \subseteq \fix(\alpha)$ it is clear that every pair in $I^2$ is a commuting pair of $Q$. Therefore $\C(Q) = 2(n-m)(m-j)+(n-m)^2 = (n+m-2j)(n-m)$, as required.
	\end{proof}
	
	\lref{l:symmpermutehole} is proved by taking a commutative partial
	quasigroup $Q'$ with a hole $H$, finding a permutation
	$\alpha\in\Sym(Q')$ which fixes $Q'\setminus H$ and considering
	$Q'\l\alpha^{-1}\r$. Let $n$ and $m$ be positive integers and suppose
	that $Q' \in \Omega(n, m)$ is commutative with hole $H$. Let
	$\alpha\in\Sym(Q')$ fix $H$ pointwise and consider
	$Q=Q'\l\alpha^{-1}\r$. Without more information about $Q'$ we cannot
	determine $\C(Q)$ exactly. This is because determining the number of
	commuting pairs in $(\supp(\alpha))^2$ amounts to solving the equation
	$\alpha(i)*j=\alpha(j)*i$ where $(i, j) \in \supp(\alpha)^2$. However,
	the following lemma tells us that we can always choose a permutation
	$\alpha$ so that the number of such commuting pairs is small.

	For the rest of this paper set
	\[
	\beta(j) = \begin{cases}
		j^2 & \text{if } j\le2, \\
		\lfloor j(2j-3)/(j-2) \rfloor & \text{if } j \geq 3,
	\end{cases}	
	\]
	for each positive integer $j$. 
	
	\begin{lem}\label{l:dersolns}
		Let $U$ and $V$ be sets with $n = |U|$ and suppose that $* : U^2 \to V$ satisfies the following properties:
		\begin{itemize}
			\item If $u_1*u_2 = u_1*u_3$ then $u_2=u_3$,
			\item If $u_2*u_1 = u_3*u_1$ then $u_2=u_3$.
		\end{itemize} 
		There exists a derangement $\alpha \in \Der(U)$ such that the number of pairs $(x, y) \in U^2$ such that $\alpha(x) * y = \alpha(y) * x$ is at most $\beta(n)$.
	\end{lem}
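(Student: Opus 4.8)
We may assume $n \geq 3$, the cases $n \leq 2$ being trivial since there $\beta(n) = n^2 = |U^2|$. For $\alpha \in \Sym(U)$ write $N(\alpha)$ for the number of pairs $(x,y) \in U^2$ with $\alpha(x)*y = \alpha(y)*x$; every diagonal pair $(x,x)$ qualifies, so $N(\alpha) = n + N'(\alpha)$ where $N'(\alpha)$ counts those pairs with $x \neq y$. Since every $n$-cycle of $U$ is a derangement, it suffices to produce an $n$-cycle $\alpha$ with $N'(\alpha) \leq \beta(n) - n$, and one checks that $\beta(n) - n = \lfloor n(n-1)/(n-2) \rfloor$ for $n \geq 3$. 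The plan is to obtain such an $\alpha$ by averaging $N'$ over the $(n-1)!$ $n$-cycles of $U$.

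First I would record two elementary facts. (i) For distinct $x, y \in U$, set $R_{x,y} = \{(a,b) \in U^2 : a*y = b*x\}$. Then $|\{(a,b) \in R_{x,y} : a \neq x \text{ and } b \neq y\}| \leq n - 1$. Indeed, the two cancellation hypotheses make $a \mapsto a*y$ and $b \mapsto b*x$ injective, so $R_{x,y}$ is the graph of a partial bijection of $U$ and $|R_{x,y}| \leq n$; this is clear if $|R_{x,y}| \leq n-1$, while if $|R_{x,y}| = n$ the partial bijection is a bijection, so $R_{x,y}$ contains a pair whose first coordinate is $x$, and omitting it leaves at most $n-1$ pairs. (ii) For distinct $x, y \in U$ and any $a, b \in U$ with $a \neq x$ and $b \neq y$, the number of $n$-cycles $\alpha$ of $U$ with $\alpha(x) = a$ and $\alpha(y) = b$ is at most $(n-3)!$. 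This count is $0$ unless $a \neq b$ and $(a,b) \neq (y,x)$ (the latter would force the transposition $(x\,y)$, impossible for an $n$-cycle with $n \geq 3$); otherwise the requirements $\alpha(x)=a$, $\alpha(y)=b$ amount to two directed edges $x \to a$, $y \to b$ that are vertex-disjoint or share a single vertex and so form a path, and contracting them identifies the $n$-cycles in question with the $(n-2)$-cycles on a set of size $n-2$, of which there are $(n-3)!$.

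Then I would run the averaging argument. Summing over all $n$-cycles,
\[
\sum_{\alpha}N'(\alpha) \;=\; \sum_{\substack{(x,y)\in U^2\\ x\neq y}}\big|\{\,n\text{-cycles }\alpha : \alpha(x)*y=\alpha(y)*x\,\}\big|.
\]
For a fixed pair $x \neq y$, any $n$-cycle $\alpha$ counted by the inner term has $(\alpha(x),\alpha(y)) \in R_{x,y}$ and, being a derangement, satisfies $\alpha(x) \neq x$ and $\alpha(y) \neq y$; by (i) there are at most $n-1$ admissible values of $(\alpha(x),\alpha(y))$, and by (ii) each contributes at most $(n-3)!$ $n$-cycles, so the inner term is at most $(n-1)(n-3)!$. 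Hence $\sum_{\alpha}N'(\alpha) \leq n(n-1)\cdot(n-1)(n-3)! = n(n-1)^2(n-3)!$, and dividing by $(n-1)!$ shows the average of $N'$ over the $n$-cycles is at most $n(n-1)/(n-2)$. Therefore some $n$-cycle $\alpha$ satisfies $N'(\alpha) \leq \lfloor n(n-1)/(n-2) \rfloor = \beta(n) - n$, whence $N(\alpha) \leq \beta(n)$, as required.

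I expect the delicate point to be fact (i): the crude estimate $|\{(a,b)\in R_{x,y} : a\neq x, b\neq y\}| \leq n$ only yields an average of about $n^2/(n-2)$, which strictly exceeds $\beta(n) - n$ once $n$ is moderately large, so the improvement to $n-1$ — obtained simply by noting that a largest-possible $R_{x,y}$ is a full bijection and hence necessarily loses a pair when the one indexed by $x$ is deleted — is exactly what makes the bound match $\beta(n)$. A secondary check is that the small cases $n \in \{3,4\}$, where $\lfloor 2/(n-2)\rfloor > 0$, are handled uniformly by the same computation of $\beta(n) - n$.
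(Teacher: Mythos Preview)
Your proof is correct. Both you and the paper run an averaging argument over a family of derangements, but the implementations differ. The paper averages over \emph{all} derangements of $U$ and, for each ordered pair $(x,y)$ with $x\ne y$, bounds $\PP(\alpha(x)*y=\alpha(y)*x)$ by conditioning on $\alpha(x)=k$ and invoking $\PP(\alpha(y)=z\mid\alpha(x)=k)\le 1/(n-2)$; this yields $\EE[N(\alpha)]\le n(2n-3)/(n-2)$. You average only over the $n$-cycles, where the number of permutations with two prescribed values is the elementary and exact count $(n-3)!$ (or $0$), and you combine this with your fact~(i) sharpening the number of admissible target pairs $(a,b)$ from $n$ down to $n-1$. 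Your route replaces reasoning about conditional probabilities in the derangement measure---which is not uniform on the possible values of $\alpha(y)$ once $\alpha(x)$ is fixed---by a purely combinatorial count, at the modest cost of restricting to a proper subfamily of derangements; both computations arrive at the same bound $n(2n-3)/(n-2)$ on the average and hence the same conclusion.
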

	
	\begin{proof}
		The claim is trivial when $n\le2$, so we will assume that $n \geq 3$. Choose a derangement $\alpha \in \Der(U)$ uniformly at random, and let $\PP$ denote probability on the resulting probability space. Let $(x, y) \in U^2$ with $x \neq y$. Then
		\begin{align}
			\PP(\alpha(x) * y = \alpha(y) * x)
			&= \sum_{k \in U \setminus \{x\}} \PP(\alpha(x)=k \cap  k * y = \alpha(y) * x) \nonumber\\
			&= \sum_{k \in U \setminus \{x\}} \PP(\alpha(x)=k) \PP(k * y = \alpha(y) * x \mid \alpha(x)=k) \nonumber\\
			&= \frac1{n-1} \sum_{k \in U \setminus \{x\}} \PP(k * y = \alpha(y) * x \mid \alpha(x)=k).\label{e:p1}
		\end{align}
		Fix some $k \in U \setminus \{x\}$ and consider $\PP(k * y = \alpha(y) * x \mid\alpha(x)=k)$. The assumptions on $*$ imply that there is at most one $z \in U$ such that $k*y = z*x$. Hence 
		\begin{equation}\label{e:p2}
			\PP(k*y = \alpha(y) * x \mid \alpha(x)=k) = \begin{cases}
				\PP(\alpha(y)=z \mid \alpha(x)=k) & \text{if } z \in U \text{ is such that } k * y =z * k, \\
				0 & \text{if }k * y \neq z * k\text{ for all } z \in U.
			\end{cases}
		\end{equation}
		If $\alpha(x)=k$ then there are at least $n-2$ possible values for $\alpha(y)$. Hence for any $z \in U$ it is true that $\PP(\alpha(y)=z \mid \alpha(x)=k) \leq 1/(n-2)$. Combining this with \eref{e:p1} and \eref{e:p2} we obtain
		\[
		\PP(\alpha(x) * y = \alpha(y) * x) \leq \frac1{(n-2)}.
		\]
		Note that trivially, $\PP(\alpha(x) * x = \alpha(x) * x) = 1$ for every $x \in U$. Let $W$ denote the number of pairs $(x, y) \in U^2$ such that $\alpha(x) * y = \alpha(y) * x$. For a pair $(x, y) \in U^2$ let $\II_{(x, y)}$ denote the variable indicating the event that $\alpha(x) * y = \alpha(y) * x$. Let
		\[
		S = \sum_{(x, y) \in U^2} \II_{(x, y)}.
		\]
		Then, 
		\[
		\EE(S) = \sum_{(x, y) \in U^2} \PP(\II_{(x, y)}=1) = n+\frac{n(n-1)}{n-2} = \frac{n(2n-3)}{n-2}.
		\]
		The lemma follows.
	\end{proof}
	
	We can now use \lref{l:dersolns} to prove a result analogous to \lref{l:symmpermutehole} when we take a commutative partial quasigroup $Q$ with a hole $H$ and consider $Q\l\alpha^{-1}\r$ where $\alpha \in \Sym(Q)$ fixes $H$.
	
	\begin{lem}\label{l:holecomm}
		Let $n$ and $m \leq \lfloor n/2 \rfloor$ be positive integers. Also
		assume that $m$ is odd if $n$ is odd. For every
		$j\in\{2,3,\ldots,n-m\}$ there exists some $Q \in \Omega(n, m)$ with
		$\C(Q) = (n-j-m)(n-j+m)+i$, for some $i$ satisfying $i\equiv j\bmod
		2$ and $j \leq i \leq \beta(j)$.
	\end{lem}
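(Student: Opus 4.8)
The plan is to parallel the proof of \lref{l:symmpermutehole}: start from a commutative partial quasigroup with a hole, apply a permutation $\alpha$ supported inside the filled part (rather than inside the hole), form $Q'\l\alpha^{-1}\r$, and use \lref{l:dersolns} to keep the number of ``bad'' commuting pairs small.

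Concretely, I would first use \tref{t:symmholeeven} to obtain a symmetric $Q' \in \Omega(n, m)$ with hole $H$, and write $I = Q' \setminus H$, so that $|I| = n - m \geq j$. Fix any $U \subseteq I$ with $|U| = j$. The operation $*$ of $Q'$ restricted to $U^2$ maps into $Q'$ and, since $Q'$ is a partial quasigroup, satisfies both cancellation conditions of \lref{l:dersolns}; hence there is a derangement $\alpha_0 \in \Der(U)$ for which the number $i$ of pairs $(x, y) \in U^2$ with $\alpha_0(x)*y = \alpha_0(y)*x$ satisfies $i \leq \beta(j)$. Extend $\alpha_0$ to $\alpha \in \Sym(Q')$ by fixing $Q' \setminus U$ (in particular all of $H$) pointwise, and set $Q = Q'\l\alpha^{-1}\r$, with operation $\circ$. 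Because $\alpha$ fixes $H$ pointwise and permutes $I$, the hole property of $Q'$ carries over to $Q$ (if $x \circ y = \alpha(x)*y \in H$ then $(\alpha(x), y) \in (Q'\setminus H)^2$, hence $(x, y) \in (Q \setminus H)^2$), so $Q \in \Omega(n, m)$ with hole $H$.

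It then remains to count commuting pairs of $Q$. A pair $(x, y) \in P = Q^2 \setminus H^2$ is commuting in $Q$ exactly when $\alpha(x)*y = \alpha(y)*x$. Partitioning $Q$ as $(I \setminus U) \sqcup U \sqcup H$ splits $P$ into blocks. On $(I \setminus U)^2$, on $(I \setminus U) \times H$, and on $H \times (I \setminus U)$, the permutation $\alpha$ fixes both coordinates, so commutativity of $Q'$ makes every pair commuting, contributing $(n - m - j)^2 + 2m(n-m-j) = (n-j-m)(n-j+m)$ pairs. On any block having one coordinate in $U$ and the other in $H \cup (I \setminus U)$, one argument is fixed by $\alpha$, so commutativity and cancellation in $Q'$ force the other argument to be fixed too, which is impossible on $U$; these blocks contribute nothing. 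The block $U^2$ contributes $i$. Hence $\C(Q) = (n-j-m)(n-j+m) + i$. Finally, the $j$ diagonal pairs of $U^2$ always commute, so $i \geq j$; the off-diagonal commuting pairs of $U^2$ occur in transpose pairs $\{(x,y),(y,x)\}$, so $i - j$ is even; and $i \leq \beta(j)$ by construction.

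The only points requiring care are verifying that the restricted operation $*|_{U^2}$ genuinely satisfies the hypotheses of \lref{l:dersolns}, and the case analysis over the mixed blocks when counting $\C(Q)$; both are routine, so I expect no serious obstacle.
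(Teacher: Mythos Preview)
Your proposal is correct and is essentially the paper's own argument: start from a commutative $Q'\in\Omega(n,m)$ via \tref{t:symmholeeven}, choose a derangement of a $j$-subset of $I=Q'\setminus H$ using \lref{l:dersolns}, extend by the identity, form $Q=Q'\l\alpha^{-1}\r$, and count commuting pairs block by block. The only differences are cosmetic---you make the choice of $U\subseteq I$ and the verification that $Q\in\Omega(n,m)$ explicit, whereas the paper absorbs these into the sentence ``By \lref{l:dersolns} there exists $\alpha\in\Sym(Q')$ such that\ldots''.
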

	
	\begin{proof}
		Let $Q' \in \Omega(n, m)$ be commutative with hole $H$ of order $m$, which exists by \tref{t:symmholeeven}. Let $I = Q' \setminus H$. By \lref{l:dersolns} there exists an $\alpha \in \Sym(Q')$ such that:
		\begin{itemize}
			\item $H \subseteq \fix(\alpha)$,
			\item $|\fix(\alpha) \cap I| = n-m-j$, and
			\item there are at most $\beta(j)$ pairs $(x, y) \in \supp(\alpha)^2$ such that $\alpha(x) * y = \alpha(y) * x$.
		\end{itemize}
		Let $Q = Q'\l\alpha^{-1}\r$ and denote the binary operation on $Q$ by $\circ$.
		
		First, consider $(x, y) \in I \times H$ such that $x \circ y = y \circ x$. Then $\alpha(x) * y = \alpha(y) * x$. Since $y \in H$ it follows that $\alpha(y)=y$ and so $\alpha(x) * y = y * x$. Since $Q'$ is commutative it follows that $\alpha(x)=x$ and so the number of commuting pairs of $Q$ which lie in $I \times H$ is exactly $|\fix(\alpha) \cap I||H| = m(n-m-j)$. Similarly, the number of commuting pairs of $Q$ which lie in $H \times I$ is $m(n-m-j)$. 
		
		Now consider $(x, y) \in I \times I$ such that $x \circ y = y \circ x$, so that $\alpha(x) * y = \alpha(y) * x$. If $x \in \fix(\alpha)$ then $x * y = \alpha(y) * x$ which implies that $y \in \fix(\alpha)$ since $Q'$ is commutative. Similarly if $y \in \fix(\alpha)$ then $x \in \fix(\alpha)$ also. This gives $|\fix(\alpha) \cap I|^2 = (n-m-j)^2$ commuting pairs. Now suppose that $\{x, y\} \subseteq I \setminus \fix(\alpha)$. Then $(x, y) \in \supp(\alpha)^2$ and $\alpha(x) * y = \alpha(y) * x$. By definition of $\alpha$ there are at most $\beta(j)$ such pairs. Furthermore, the number of such pairs must be congruent to $j$ modulo $2$, since each pair $(x, x) \in \supp(\alpha)^2$ is commuting and the pair $(x, y) \in \supp(\alpha)^2$ is commuting if and only if $(y, x)$ is commuting. Therefore $\C(Q) = 2m(n-m-j)+(n-m-j)^2+i$ for some $i$ satisfying $j \leq i \leq \beta(j)$ and $i \equiv j \bmod 2$. The lemma follows.
	\end{proof}
	
	\section{Main results}\label{s:main}
	
	In this section we prove \tref{t:commpairs} and \tref{t:Kq}. 
	
	\subsection{Proof of \tref{t:commpairs}}
	
	Our first task towards proving \tref{t:commpairs} is to prove the fact that $\C(n) \subseteq \D(n)$ for all positive integers $n$.
	
	\begin{lem}\label{l:commvalid}
		Let $Q$ be a quasigroup of order $n$. Then $\C(Q) \in \D(n)$.
	\end{lem}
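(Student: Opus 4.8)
The plan is to verify directly the three arithmetic constraints that carve $\D(n)$ out of the integers: that $n\le\C(Q)\le n^2$, that $\C(Q)\equiv n\pmod 2$, and that $\C(Q)\notin\{n^2-2,\,n^2-4\}$. Since $\D(n)$ consists precisely of the integers in $[n,n^2-6]$ having the same parity as $n$, together with the single value $n^2$, these three facts are equivalent to the assertion $\C(Q)\in\D(n)$.

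First I would dispose of the elementary part. Each pair $(x,x)$ commutes, so $\C(Q)\ge n$. For $x\ne y$ the pair $(x,y)$ commutes if and only if $(y,x)$ does, so the non-diagonal commuting pairs fall into $2$-element orbits under transposition; hence $\C(Q)=n+2t$ for some integer $t\ge 0$. This gives $\C(Q)\equiv n\pmod 2$ at once, and, since $t\le\binom{n}{2}$, also $\C(Q)\le n^2$.

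The substance is ruling out $\C(Q)\in\{n^2-2,\,n^2-4\}$. Let $L$ be the Cayley table of $Q$ and $L^{T}$ its transpose; then $n^2-\C(Q)$ is the number of cells where $L$ and $L^{T}$ disagree. This set of difference cells is invariant under transposition and meets no diagonal cell, so it is a disjoint union of transpose-pairs $\{(a,b),(b,a)\}$ with $a\ne b$. If $\C(Q)=n^2-2$ there is exactly one such pair, and then row $a$ of $L$ and row $a$ of $L^{T}$ agree in every column except column $b$; but both of these rows are permutations of the symbol set, and two permutations of a finite set cannot differ in exactly one coordinate — a contradiction. If $\C(Q)=n^2-4$ there are exactly two transpose-pairs, arising from two \emph{distinct} $2$-subsets $\{a,b\}$ and $\{c,d\}$ of the index set; their symmetric difference is non-empty, so some index $r$ lies in exactly one of these subsets, and then exactly one of the four difference cells lies in row $r$, so row $r$ of $L$ and of $L^{T}$ differ in precisely one position — the same contradiction.

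The only genuine idea is the observation that two permutations of a finite set cannot differ in exactly one coordinate; the rest is bookkeeping, so I do not anticipate a real obstacle. The one point deserving a moment's care is the degenerate small cases $n\le 3$, where the interval $[n,n^2-6]$ is empty or a single point: there one should check that the conclusion $\C(Q)\in\D(n)$ still reads correctly, which it does, since the parity-and-bounds constraints then already pin $\C(Q)$ down to $\{n^2\}$ (for $n\le 2$) or $\{n,n^2\}$ (for $n=3$).
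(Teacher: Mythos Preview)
Your argument is correct. The elementary part (bounds and parity) matches the paper exactly. For the exclusion of $n^2-2$ and $n^2-4$ you take a different route: instead of the paper's element-chasing (picking specific products $x*y$, $y*x$ and tracking the preimage of a symbol to force a contradiction), you observe that row $a$ of $L$ and column $a$ of $L$ are both permutations of the symbol set, hence cannot differ in exactly one position, and then count difference cells per row. This is a genuinely cleaner packaging: it handles both cases with a single structural observation (each row contains $0$ or $\ge 2$ non-commuting cells), whereas the paper treats $n^2-2$ and $n^2-4$ by separate ad hoc chases. The paper's approach has the minor advantage of staying purely in quasigroup language without invoking the Latin-square viewpoint, but yours is shorter and makes the obstruction more transparent.
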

	
	\begin{proof}
		Let $k$ be the number of commuting pairs in $Q$. Since each pair $(x, x) \in Q^2$ is commuting and the pair $(x, y) \in Q^2$ is commuting if and only if $(y, x)$ is commuting it follows that $k \equiv n \bmod 2$ and $n \leq k \leq n^2$. It remains to show that $k \not\in \{n^2-2, n^2-4\}$.
		
		Suppose that $k = n^2-2$. Let $(x, y)$ and $(y, x)$ be the only non-commuting pairs of $Q$. Let $z_1 = x * y$ and $z_2 = y * x$. Let $z \in Q$ be such that $x * z = z_2$. Since $z_1 \neq z_2$ we cannot have $z=y$ and so $(x, z)$ is commuting. So $z * x = x * z = z_2$ which implies that $z=y$, a contradiction. 
		
		Now suppose that $k = n^2-4$. Let $(x, y)$, $(y, x)$, $(z, w)$ and $(w, z)$ be the only non-commuting pairs in $Q$. Let $u_1 = x * y$ and $u_2 = y * x$. Let $u \in Q$ be such that $x * u = u_2$ and note that $u \neq y$ since $u_1 \neq u_2$. If $\{x, u\} \neq \{z, w\}$ then $(x, u)$ is commuting and so $u * x = x * u = u_2$ which implies that $u = y$, which is false. Therefore $\{x, u\} = \{z, w\}$. Without loss of generality $w=x$. Let $u' \in Q$ be such that $y * u' = u_1$ and note that $u' \neq x$ since $u_1 \neq u_2$. Also note that $z \neq y$ since $Q$ has $n^2-4$ commuting pairs. It follows that $(u', y)$ is commuting and so $u'*y=y*u'=u_1$ which implies that $u'=x$, a contradiction.
	\end{proof}
	
	In the remainder of this subsection we work on constructing, for every positive integer $n$ and every $k \in \DD(n)$, a quasigroup, or Latin square, of order $n$ with exactly $k$ commuting pairs. For some pairs $(n, k)$ we can do this directly using cycle switching, but for most pairs we use a recursive construction based on pasting a quasigroup of some order $m \leq \lfloor n/2 \rfloor$ into a member of $\Omega(n, m)$. We therefore split this subsection into two parts: dealing with the recursive part of the construction and dealing with the base cases.
	
	\subsubsection{The recursive step}\label{ss:recursive}
	
	\tref{t:commpairs} can be stated as
	\begin{equation}\label{e:goal}
		\C(n) = \DD(n)
	\end{equation} 
	for every positive integer $n$. The goal for this section is to prove the following lemma.
	
	\begin{lem}\label{l:recursive}
		Suppose that \eref{e:goal} is true for every positive integer $n \leq 10$. Then \eref{e:goal} is true for all positive integers $n$.
	\end{lem}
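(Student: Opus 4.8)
The plan is to induct on $n$. For $n \le 10$ the statement \eref{e:goal} holds by hypothesis, so fix $n \ge 11$ and assume \eref{e:goal} for all positive integers smaller than $n$. Since $n \notin \{4,5\}$ we have $\DD(n) = \D(n)$, and $\C(n) \subseteq \D(n)$ by \lref{l:commvalid}, so it suffices to exhibit, for each $k \in \D(n)$, a quasigroup $Q$ of order $n$ with $\C(Q) = k$. The extreme values are easy: $k = n^2$ is given by the cyclic group $\Z_n$, and $k = n$ by an anti-commutative quasigroup, which exists because $n \neq 2$ (\tref{t:anticomm}). For $k \in \{n+2, n+4, \ldots, n^2-6\}$ I would use the pasting construction: pick a hole size $1 \le m < n$, take some $Q \in \Omega(n,m)$ with a prescribed number $c = \C(Q)$ of commuting pairs, and paste into its hole a quasigroup of order $m$ with $k - c$ commuting pairs. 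Such a filler exists exactly when $k - c \in \C(m)$, and by the induction hypothesis $\C(m) = \DD(m)$, which (for $m \notin \{4,5\}$) is every value of the right parity between $m$ and $m^2$ apart from $m^2-2$ and $m^2-4$.

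The heart of the proof is a range analysis showing that these ingredients cover all of $\{n+2, \ldots, n^2-6\}$. The key quantitative observation is that a single value of $c = \C(Q)$ yields, after pasting, a full parity-interval of admissible $k$, namely $c + \DD(m)$, which has length $\Theta(m^2)$, while \sref{s:prelim} supplies many values of $c$ spaced only $2(n-m)$ apart: \lref{l:newcons} gives $c \in (n-m)\{1,3,\ldots,2s+1\}$, \lref{l:symmpermutehole} gives $c \in (n-m)\{n-m, n-m+2, \ldots, n+m-4\}$, and \lref{l:holecomm} gives $c$ close to $(n-j)^2 - m^2$ for each $2 \le j \le n-m$. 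Taking $m = \lfloor n/2 \rfloor$, adjusted down to the nearest odd value when $n$ is odd so that the symmetric-hole lemmas apply, the intervals $c + \DD(m)$ for consecutive $c$ overlap (since $m^2 \gg 2(n-m)$), and together with the anti-commutative member of $\Omega(n,m)$ guaranteed by \tref{t:antisymmholenew} this realises every admissible $k$ from $n$ up to about $n^2 - 2n$. The remaining top band is handled by instead taking a \emph{commutative} member of $\Omega(n,m)$, which has $\C(Q) = n^2 - m^2$ (\tref{t:symmholeeven}), and pasting in a quasigroup of order $m$ with $d \in \DD(m)$ commuting pairs; this gives every admissible $k$ in $[\,n^2-m^2+m,\ n^2-6\,] \cup \{n^2\}$, and with $m$ of size about $n/2$ this band descends below $n^2 - 2n$, so the two parts together sweep out all of $\D(n)$.

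I expect the main obstacle to be at the two ends of this patchwork. First one has to verify that the ``perturbed'' range and the ``commutative-hole'' top band genuinely overlap, with no gap between them; this fails for small $n$, and the estimates must be pushed until the threshold is small enough that everything with $n$ above it is covered — which is exactly why it is legitimate to take all $n \le 10$ as given. Second, the side conditions of the preliminary lemmas interfere: the symmetric-hole lemmas force the hole size to be odd when $n$ is odd, which halves the usable hole sizes, and $\DD(4) = \{4,6,8,16\}$ and $\DD(5) = \{5,7,9,11,13,15,19,25\}$ have internal gaps, so holes of size $4$ or $5$ cannot be used as freely as the sketch above pretends. For almost all $n$ one simply avoids those sizes, but a couple of small orders — in particular $n = 11$ and $n = 13$, where $\lfloor n/2 \rfloor$ effectively forces the large hole to have size $5$ — fall outside the asymptotic argument, leaving a short finite list of pairs $(n,k)$ with $k$ just below $n^2$ that the pasting scheme misses. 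Those I would clear by hand, using cycle switching on a commutative Latin square of order $n$ to destroy exactly the required number of commuting pairs. Confirming that this residue really is finite — i.e., that every $n$ above a small explicit bound is handled by pasting — is, I think, the most delicate part of the bookkeeping.
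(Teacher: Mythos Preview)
Your strategy matches the paper's in outline --- induct on $n$, paste a quasigroup of order $m$ into a member of $\Omega(n,m)$ with a known count $c$, and argue that the resulting intervals $c + \DD(m)$ cover $\D(n)$ --- but the execution diverges in one structural choice. You fix a single hole size $m \approx n/2$ and plan to sweep the range by combining several sources of $c$ (\lref{l:newcons}, \lref{l:symmpermutehole}, \lref{l:holecomm}). The paper instead leans almost entirely on \lref{l:kconds2} but applies it with \emph{two} hole sizes, $q \approx n/2$ and $r \approx n/3$. The point is that with $m=q$ the upper end of the \lref{l:kconds2} range is $n^2-2n-q^2+q+1 \approx \tfrac34 n^2 - \tfrac32 n$, while the commutative-hole band begins at $n^2-q^2+q \approx \tfrac34 n^2 + \tfrac12 n$, leaving a gap of roughly $2n$; the $m=r$ application of \lref{l:kconds2} reaches up to about $\tfrac89 n^2$ and bridges it. Using \lref{l:symmpermutehole} with $m=q$, as you suggest, would also bridge this gap (its top $c$ plus $q^2-6$ reaches roughly $n^2-2n$), so your route is viable, but the two-hole-size device makes the interval endpoints uniform and the chain of inequalities short. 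Note also that \lref{l:newcons} requires $m<n/2$ strictly, so it is unavailable for even $n$ with $m=n/2$; you would have to fall back on the other lemmas there.

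Your threshold estimate is optimistic. The paper's explicit inequalities close only for $n \geq 28$; for $10 \leq n \leq 27$ it simply computes the set $\E(n)$ (drawing on all of \tref{t:anticomm}, \cyref{c:embedding}, \lref{l:kconds} and \lref{l:switch}) and checks that it equals $\DD(n)$. Thus no hand-built Latin squares beyond $n \leq 10$ are needed, and in particular $n=11$ and $n=13$ are absorbed into this machine verification rather than requiring the ad hoc cycle-switching you anticipate.
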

	
	The following lemma lets us make use of the results proved in \sref{s:prelim}.
	
	\begin{lem}\label{l:embed}
		Let $n$ and $m$ be positive integers. If there exists some $Q \in \Omega(n, m)$ with $\C(Q) = c$ then $\{c+\ell : \ell \in \C(m)\} \subseteq \C(n)$.
	\end{lem}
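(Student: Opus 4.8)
The plan is to realise $c+\ell$ as $\C(R)$ for a suitable quasigroup $R$ of order $n$, using the pasting construction $Q'\inc Q$ introduced in \sref{s:LS}. Fix $Q\in\Omega(n,m)$ with hole $H$ and $\C(Q)=c$, and let $\ell\in\C(m)$, so that there is a quasigroup of order $m$ with exactly $\ell$ commuting pairs. First I would relabel that quasigroup so that its underlying set is exactly $H$; renaming elements is an isomorphism and does not change the number of commuting pairs, so we obtain a quasigroup $Q'=(H,\circ)$ with $\C(Q')=\ell$. Then $R=Q'\inc Q$ is a genuine quasigroup of order $n$.

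The key step is to compute $\C(R)$ by partitioning $Q^2$ into $H^2$ and $Q^2\setminus H^2$. For $(i,j)\in H^2$, both products $i\cdot j$ and $j\cdot i$ are evaluated in $Q'$, so $(i,j)$ is a commuting pair of $R$ if and only if it is a commuting pair of $Q'$; this accounts for exactly $\ell$ pairs. For $(i,j)\in Q^2\setminus H^2=P$, observe that $P$ is symmetric (since $(i,j)\notin H^2$ forces $\{i,j\}\not\subseteq H$, hence $(j,i)\notin H^2$), so $(j,i)\in P$ as well, and both products are evaluated in $Q$ via $*$; thus $(i,j)$ is a commuting pair of $R$ if and only if it is a commuting pair of the partial quasigroup $Q$. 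By hypothesis there are exactly $c$ such pairs. Adding, $\C(R)=c+\ell$, so $c+\ell\in\C(n)$. Since $\ell\in\C(m)$ was arbitrary, $\{c+\ell:\ell\in\C(m)\}\subseteq\C(n)$.

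There is no substantial obstacle here; the only points requiring care are that the pasting operation demands the pasted quasigroup to have underlying set equal to the hole (handled by the relabelling in the first step), and that the notion of commuting pair for a partial quasigroup with hole is unambiguous, which holds precisely because $P=Q^2\setminus H^2$ is a symmetric subset of $Q^2$.
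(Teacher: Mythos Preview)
Your argument is correct and is exactly the paper's approach: paste a quasigroup $Q'$ of order $m$ with $\C(Q')=\ell$ into the hole of $Q$ and observe that $\C(Q'\inc Q)=c+\ell$. You have simply spelled out the verification of this last equality (and the harmless relabelling step) that the paper leaves implicit.
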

	
	\begin{proof}
		Let $Q'$ be a quasigroup of order $m$ with $\C(Q') = \ell$. Then $\C(Q' \inc Q) = c+\ell$.
	\end{proof}
	
	Combining \lref{l:embed} with \tref{t:symmholeeven}, \tref{t:antisymmholenew}, \lref{l:newcons} and \lref{l:symmpermutehole} we obtain the following corollary.
	
	\begin{cor}\label{c:embedding}
		Let $n$ and $m \leq \lfloor n/2 \rfloor$ be positive integers.
		\begin{enumerate}[(i)]
			\item If $n$ is even or $m$ is odd then $\{n^2-m^2+\ell : \ell \in \C(m)\} \subseteq \C(n)$.
			\item $\{n-m+\ell : \ell \in \C(m)\} \subseteq \C(n)$, provided $n>2$ and
			$(n,m)\ne(4,2)$.
			\item If $\lceil n/3 \rceil \leq m<n/2$ then $\{(2j+1)(n-m)+\ell : j \in \{0, 1, \ldots, s\}, \ell \in \C(m)\} \subseteq \C(n)$, where $s = \max\{3m-n, m-1-\lfloor (n-m-2)/4 \rfloor\}$. 
			\item If $n$ is even or $m$ is odd then $\{(n+m-2j)(n-m)+\ell : j \in \{2, 3, \ldots, m\}, \ell \in \C(m)\} \subseteq \C(n)$.
		\end{enumerate}
	\end{cor}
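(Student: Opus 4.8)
The plan is to treat each of the four parts uniformly: in each case I would exhibit an explicit member $Q \in \Omega(n, m)$ whose number of commuting pairs $\C(Q)$ is exactly the ``base value'' appearing in the corresponding conclusion, and then invoke \lref{l:embed} to obtain $\{\C(Q) + \ell : \ell \in \C(m)\} \subseteq \C(n)$. Since \lref{l:embed} already handles the ``$+\,\C(m)$'' bookkeeping, all that remains is to produce the right $Q$ and read off $\C(Q)$ from the relevant existence result proved earlier in \sref{s:prelim}.

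For part~(i), the hypothesis ``$n$ even or $m$ odd'' is precisely the hypothesis of \tref{t:symmholeeven}, so there is a commutative $Q \in \Omega(n, m)$ with hole $H$; its domain $P = Q^2 \setminus H^2$ has size $n^2 - m^2$ and every pair in $P$ commutes, so $\C(Q) = n^2 - m^2$, and \lref{l:embed} gives the claim. For part~(ii), the constraints $n > 2$ (hence $n \geq 3$), $m \leq \lfloor n/2 \rfloor$, $(n, m) \neq (4, 2)$ are exactly those of \tref{t:antisymmholenew}, which supplies an anti-commutative $Q \in \Omega(n, m)$; its only commuting pairs are the diagonal pairs $(x, x) \in P$, of which there are $|Q \setminus H| = n - m$, so $\C(Q) = n - m$, and \lref{l:embed} again finishes it.

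For parts~(iii) and~(iv), I would apply \lref{l:newcons} and \lref{l:symmpermutehole} respectively, once for each admissible index $j$, and take the union of the resulting sets over $j$. For~(iii) one checks that the $s$ in the statement coincides with the $s = \max\{2m - k,\, m - 1 - \lfloor (k-2)/4 \rfloor\}$ of \lref{l:newcons} after substituting $k = n - m$ (indeed $2m - k = 3m - n$); then for each $j \in \{0, 1, \ldots, s\}$, \lref{l:newcons} provides a member of $\Omega(n, m)$ with $\C(Q) = (n-m)(2j+1)$, which \lref{l:embed} converts into $\{(2j+1)(n-m) + \ell : \ell \in \C(m)\} \subseteq \C(n)$. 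For~(iv), \lref{l:symmpermutehole} (whose hypothesis is again ``$n$ even or $m$ odd'') gives, for each $j \in \{2, 3, \ldots, m\}$, a member with $\C(Q) = (n+m-2j)(n-m)$, and the identical application of \lref{l:embed} completes the proof. There is no genuine obstacle here: the corollary is a mechanical repackaging of the section's four constructions through \lref{l:embed}, and the only steps requiring a moment's thought are the two ``count the diagonal cells outside the hole'' computations in~(i) and~(ii) and the reindexing $k = n - m$ in~(iii).
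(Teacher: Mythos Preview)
Your proposal is correct and matches the paper's approach exactly: the paper states this corollary simply as the result of combining \lref{l:embed} with \tref{t:symmholeeven}, \tref{t:antisymmholenew}, \lref{l:newcons} and \lref{l:symmpermutehole}, and you have spelled out precisely those four applications together with the two easy counts $\C(Q)=n^2-m^2$ and $\C(Q)=n-m$ and the reindexing $k=n-m$.
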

	
	We can also use \lref{l:holecomm} to build quasigroups with a known
	number of commuting pairs.
	
	\begin{lem}\label{l:kconds}
		Let $n$ be a positive integer and let $m \leq \lfloor n/2 \rfloor$
		be a positive, saturated integer. Also assume that $m$ is odd if $n$
		is odd. If $k \in \D(n)$ is such that there exists some
		$j \in \{2,3, \ldots, n-m\}$ so that
		\begin{equation}\label{e:k}
			\beta(j)+m \leq k-(n-j-m)(n-j+m) \leq j+m^2-6,
		\end{equation}
		then $k \in \C(n)$.
	\end{lem}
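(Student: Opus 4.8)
The plan is to deduce the lemma by feeding the partial quasigroups produced by \lref{l:holecomm} into \lref{l:embed}, exploiting the hypothesis that $m$ is saturated so that $\C(m) = \D(m)$ contains the full arithmetic progression $\{m, m+2, \ldots, m^2-6\}$. The hypotheses on $n$ and $m$ here ($m \le \lfloor n/2\rfloor$, with $m$ odd if $n$ is odd) are exactly those required by \lref{l:holecomm}, and the index $j$ supplied by the statement lies in $\{2,3,\ldots,n-m\}$, so \lref{l:holecomm} applies directly: it yields some $Q \in \Omega(n,m)$ with $\C(Q) = (n-j-m)(n-j+m)+i$ for some $i$ with $i \equiv j \bmod 2$ and $j \le i \le \beta(j)$.

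The next step is to set $\ell = k - \C(Q) = k - (n-j-m)(n-j+m) - i$ and check that $\ell$ lies in $\{m, m+2, \ldots, m^2-6\}$. For the lower bound, the left inequality of \eref{e:k} together with $i \le \beta(j)$ gives $\ell \ge \beta(j) + m - i \ge m$. For the upper bound, the right inequality of \eref{e:k} together with $i \ge j$ gives $\ell \le j + m^2 - 6 - i \le m^2 - 6$. For the parity, note that $k \in \D(n)$ forces $k \equiv n \bmod 2$, while $(n-j-m)(n-j+m) = (n-j)^2 - m^2 \equiv n - j - m \bmod 2$, and $i \equiv j \bmod 2$; hence $\ell \equiv n - (n-j-m) - j \equiv m \bmod 2$. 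Thus $\ell \in \{m, m+2, \ldots, m^2-6\} \subseteq \D(m) = \C(m)$, the last equality because $m$ is saturated. (The progression is nonempty whenever \eref{e:k} is satisfiable, since $\beta(j) \ge j$ forces $m^2-6 \ge m$.)

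Finally, I would apply \lref{l:embed} to the hole of $Q$: since $\ell \in \C(m)$, we get $k = \C(Q) + \ell \in \C(n)$. There is no genuine obstacle once \lref{l:holecomm} is available; the only point requiring care is the parity check, and it succeeds precisely because $k$ was assumed to lie in $\D(n)$, which is what makes $k - \C(Q) - i$ land on the correct residue modulo $2$ regardless of which value $i$ the construction happens to deliver.
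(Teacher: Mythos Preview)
Your proof is correct and follows essentially the same route as the paper: invoke \lref{l:holecomm} to obtain $Q\in\Omega(n,m)$ with $\C(Q)=(n-j-m)(n-j+m)+i$, verify via \eref{e:k} and the parity condition that $\ell=k-\C(Q)\in\{m,m+2,\ldots,m^2-6\}\subseteq\C(m)$, and then paste a suitable order-$m$ quasigroup into the hole. The only cosmetic difference is that the paper builds $Q'\hookrightarrow Q$ explicitly rather than citing \lref{l:embed}, and expands the parity check as $k-n^2+2nj-j^2+m^2-i$ rather than using $(n-j)^2-m^2\equiv n-j-m\bmod 2$; both arrive at the same conclusion.
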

	
	\begin{proof}
		By \lref{l:holecomm}, there exists $Q \in \Omega(n, m)$ with $\C(Q) = (n-j-m)(n-j+m)+i$, for some $i$ satisfying $j \leq i \leq \beta(j)$ and $i \equiv j \bmod 2$. Then note that $m \leq k-(n-m-j)(n+m-j)-i \leq m^2-6$ by \eref{e:k}. Furthermore, 
		\[
		k-(n-m-j)(n+m-j)-i = k-n^2+2nj-j^2+m^2-i \equiv m \bmod 2,
		\]
		since $k \equiv n \bmod 2$ and $i \equiv j \bmod 2$. Hence $k-(n-m-j)(n+m-j)-i \in \{m, m+2, \ldots, m^2-6\}$. Since $m$ is saturated there exists a quasigroup $Q'$ of order $m$ with $\C(Q') = k-(n-m-j)(n+m-j)-i$ commuting pairs. So $\C(Q' \inc Q) = k$.
	\end{proof}
	
	We next give some explicit sufficient conditions to be able to use
	\lref{l:kconds} to conclude that $k \in \C(n)$ for some positive
	integers $n$ and some $k \in \D(n)$.
	
	\begin{lem}\label{l:kconds2}
		Let $n$ be a positive integer and let $m \leq \lfloor n/2 \rfloor$
		be a positive, saturated integer such that $4+m-m^2+2n \leq 0$. Also
		assume that $m$ is odd if $n$ is odd. If $k \in \D(n)$ is an integer
		satisfying $m+\beta(n-m) \leq k < n^2-2n-m^2+m+2$, then $k\in\C(n)$.
	\end{lem}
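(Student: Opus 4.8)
The plan is to deduce Lemma~\ref{l:kconds2} from Lemma~\ref{l:kconds} by producing, for each integer $k$ in the stated range, a value $j\in\{2,3,\ldots,n-m\}$ for which \eref{e:k} holds. Since $(n-j-m)(n-j+m)=(n-j)^2-m^2$, condition \eref{e:k} for a fixed $j$ says precisely that $L(j)\le k\le U(j)$, where $L(j)=(n-j)^2-m^2+\beta(j)+m$ and $U(j)=(n-j)^2+j-6$. So the lemma reduces to the claim that the interval of integers $\big[\,\beta(n-m)+m,\ n^2-2n-m^2+m+1\,\big]$ is contained in $\bigcup_{j=2}^{n-m}[L(j),U(j)]$.

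First I would unpack the hypothesis $4+m-m^2+2n\le0$, i.e.\ $m^2-m\ge2n+4$; together with $2m\le n$ this gives $m^2-5m-4\ge0$, hence $m\ge6$, hence $n\ge12$ and $n-m\ge m\ge6$. In particular the index set $\{2,\ldots,n-m\}$ is nonempty and $n-m\ge5$. Throughout I would also use the closed forms $\beta(2)=4$, $\beta(3)=9$, $\beta(4)=10$, and $\beta(j)=2j+1$ for $j\ge5$.

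Then I would verify the four elementary facts that make the windows cover the target interval. (a) $L(n-m)=m^2-m^2+\beta(n-m)+m=\beta(n-m)+m$, matching the left endpoint of the target interval. (b) $U(2)=n^2-4n$, and $n^2-4n\ge n^2-2n-m^2+m+1$ is equivalent to $m^2-m\ge2n+1$, which holds. (c) Each window is nonempty: $L(j)\le U(j)$ is equivalent to $\beta(j)-j\le m^2-m-6$; since $n-m\ge5$ one checks $\beta(j)-j\le n-m+1$ for every $j$ in range, while $m^2-m-6\ge 2n-2\ge n-m+1$. (d) Consecutive windows overlap or abut: using $(n-j)^2-(n-j-1)^2=2(n-j)-1$ one computes $L(j)-U(j+1)=2n-3j+4+m-m^2+\beta(j)$, so $L(j)\le U(j+1)+1$ is equivalent to $\beta(j)-3j\le m^2-m-3-2n$; the right-hand side is $\ge1$, while $\beta(j)-3j\le0$ for all $j\ge2$ (it equals $-2,0,-2$ for $j=2,3,4$ and $1-j$ for $j\ge5$). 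Since $U(j)$ is strictly decreasing in $j$ on this range (once more because $2(n-j)-1$ dominates), a short interpolation argument from (c) and (d) shows $\bigcup_{j=2}^{n-m}[L(j),U(j)]\supseteq[L(n-m),U(2)]$, which by (a) and (b) contains the target interval. Hence any $k$ in the target interval satisfies \eref{e:k} for an appropriate $j$, and since $k\in\D(n)$ by hypothesis, Lemma~\ref{l:kconds} gives $k\in\C(n)$.

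I do not expect a genuine obstacle: once the ``window'' reformulation $L(j)\le k\le U(j)$ is in place, the entire argument is bookkeeping driven by the single inequality $m^2-m\ge2n+4$. The only point requiring a little care is that $\beta$ is piecewise and not smooth around $j=3,4,5$, so the inequalities in (c), (d) and the monotonicity claim must be checked by splitting into the cases $j=2$, $j\in\{3,4\}$, and $j\ge5$ using the explicit values of $\beta$ above.
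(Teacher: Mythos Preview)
Your proposal is correct and follows essentially the same approach as the paper: both arguments reduce to Lemma~\ref{l:kconds} by exhibiting a suitable $j$, and both hinge on the same inequality (your check (d), equivalently the paper's $\beta(j-1)-3j\le-3$) compared against the hypothesis $m^2-m\ge 2n+4$. The only difference is presentational: you phrase it as showing the windows $[L(j),U(j)]$ cover the target interval, while the paper picks $j$ minimal with $L(j)\le k$ and derives a contradiction from the failure of the upper bound.
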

	
	\begin{proof}
		We show that there is some $j \in \{2, 3, \ldots, n-m\}$ such that \eref{e:k} is satisfied. Let $j \in \{2, 3, \ldots, n-m\}$ be minimal such that 
		\begin{equation}\label{e:njmb1}
			(n-j-m)(n-j+m)+\beta(j) \leq k-m.
		\end{equation}
		Setting $j=n-m$ shows that such a $j$ exists since $k \geq m+\beta(n-m)$.
		Moreover, \eref{e:njmb1} fails when $j=1$ since
		$k<n^2-2n-m^2+m+2$.
		Hence, minimality of $j$ implies that
		\begin{equation}\label{e:njmb2}
			(n-(j-1)-m)(n-(j-1)+m)+\beta(j-1) > k-m.
		\end{equation}
		It remains to show that $(n-j-m)(n-j+m)+j \geq k-m^2+6$.
		Suppose, for a contradiction, that
		$(n-j-m)(n-j+m)+j < k-m^2+6$ which together with \eref{e:njmb2} implies that
		\begin{align}
			0 &< (n-j+1-m)(n-j+1+m)+\beta(j-1)+m-((n-j-m)(n-j+m)+j+m^2-6) \nonumber\\
			&= 7+\beta(j-1)-3j+m-m^2+2n.\label{e:ineq}
		\end{align}
		It is an easy fact that $\beta(j-1)-3j \leq -3$ for all integers
		$j\geq 2$. Hence \eref{e:ineq} implies that $4+m-m^2+2n>0$, which
		contradicts the definition of $m$. The lemma now follows from
		\lref{l:kconds}.
	\end{proof}
	
	We now describe a direct construction of Latin squares which have a
	high number of commuting pairs, based on cycle switching. We need the
	following lemma on the number of commuting pairs which are destroyed
	when we switch a symmetric Latin square on a row cycle.
	
	\begin{lem}\label{l:symmswitch}
		Let $L$ be a symmetric Latin square with symbol set $S$ of
		cardinality $n$, and let $\rho$ be a row cycle of $L$ of length $k$
		which involves entries $\{i, j\} \times C$ for some
		$\{i, j\}\subseteq S$ and some $C \subseteq S$. Let $L'$ be obtained from $L$
		by switching on $\rho$. Then either $\{i, j\} \cap C = \emptyset$ or
		$\{i, j\} \subseteq C$. Furthermore,
		\begin{enumerate}[(i)]
			\item if $\{i, j\} \cap C = \emptyset$ then $L'$ has $n^2-4k$ commuting pairs,
			\item if $\{i, j\} = C$ then $L'$ is symmetric,
			\item if $\{i, j\} \subseteq C$ and $k \geq 3$ then $L'$ has $n^2-4k+6$ commuting pairs.
		\end{enumerate}
	\end{lem}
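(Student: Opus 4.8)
The plan is to use the hypothesis that $L$ is symmetric, so that all $n^2$ pairs of $L$ commute; after switching on $\rho$ only the entries in the cells $\{i,j\}\times C$ are altered, so it suffices to identify the pairs that are affected and, among them, count how many fail to commute in $L'$. Call a pair $(x,y)$ \emph{affected} if the cell $(x,y)$ or the cell $(y,x)$ changes, i.e.\ if $(x,y)\in(\{i,j\}\times C)\cup(C\times\{i,j\})$; any unaffected pair $(x,y)$ has $L'_{xy}=L_{xy}=L_{yx}=L'_{yx}$ and so still commutes in $L'$. Thus $\C(L')=n^2-N$ where $N$ is the number of affected pairs that become non-commuting.

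First I would prove the dichotomy. Using the symmetry of $L$ we have $r_{ij}(L_{ii})=L_{ji}=L_{ij}$ and $r_{ij}(L_{ij})=L_{jj}$. Since $C=\{c\in S:L_{ic}\in\rho\}$ and $\rho$, being a cycle of $r_{ij}$, is closed under $r_{ij}$, it follows that $i\in C\iff L_{ii}\in\rho\iff L_{ij}\in\rho\iff j\in C$; hence either $\{i,j\}\cap C=\emptyset$ or $\{i,j\}\subseteq C$. In the second case, if $k=2$ then $C=\{i,j\}$, and from $\{L_{ii},L_{ij}\}=\rho=\{L_{ji},L_{jj}\}=\{L_{ij},L_{jj}\}$ together with $L_{ii}\ne L_{ij}$ one gets $L_{ii}=L_{jj}$, which is the input needed for (ii). If instead $k\ge3$, the $r_{ij}$-cycle $\rho$ containing $L_{ii}$ has length at least $3$, so it cannot be the $2$-cycle $(L_{ii}\ L_{ij})$ that would occur if $L_{ii}=L_{jj}$; together with the trivial inequalities $L_{ii}\ne L_{ij}$ and $L_{ij}\ne L_{jj}$ (valid since $i\ne j$), this shows $L_{ii},L_{ij},L_{jj}$ are pairwise distinct.

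Next comes the counting. The key observation is that for $c\in C\setminus\{i,j\}$ row $c$ of $L'$ coincides with row $c$ of $L$, so $L'_{ci}=L_{ci}=L_{ic}$ and $L'_{cj}=L_{cj}=L_{jc}$, while $L'_{ic}=L_{jc}$ and $L'_{jc}=L_{ic}$; since column $c$ of $L$ has no repeated entry and $i\ne j$, each of the four pairs $(i,c),(j,c),(c,i),(c,j)$ fails to commute in $L'$. In case (i) the affected pairs are exactly these, namely $4k$ pairs, all non-commuting, so $\C(L')=n^2-4k$. In case (ii) the only altered cells are $(i,i),(i,j),(j,i),(j,j)$, and $L_{ii}=L_{jj}$ gives $L'_{ij}=L_{jj}=L_{ii}=L'_{ji}$ while every other cell of $L'$ is symmetric, so $L'$ is symmetric. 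In case (iii) the affected pairs split as the $4(k-2)$ pairs $(i,c),(j,c),(c,i),(c,j)$ with $c\in C\setminus\{i,j\}$ (all non-commuting) together with the four pairs in $\{i,j\}^2$; among the latter, $(i,i)$ and $(j,j)$ still commute because they lie on the diagonal, whereas $(i,j)$ and $(j,i)$ do not, since $L'_{ij}=L_{jj}\ne L_{ii}=L'_{ji}$ by the distinctness just established. Hence $N=4(k-2)+2=4k-6$ and $\C(L')=n^2-4k+6$.

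The genuinely structural step is the observation that in case (iii) the three entries $L_{ii},L_{ij},L_{jj}$ are distinct --- this is exactly what stops $(i,j)$ and $(j,i)$ from remaining commuting and accounts for the difference between the $4k$ of case~(i) and the $4k-6$ of case~(iii). The remaining work is careful bookkeeping, the fiddliest point being the overlap $\{i,j\}^2$ between the two families $\{i,j\}\times C$ and $C\times\{i,j\}$ of affected pairs in case~(iii) and correctly classifying each of its four members; everything else is a routine consequence of the symmetry of $L$ and the Latin-square property.
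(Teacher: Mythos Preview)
Your proof is correct and follows essentially the same approach as the paper's: both arguments use the symmetry of $L$ to establish the dichotomy $\{i,j\}\cap C=\emptyset$ or $\{i,j\}\subseteq C$, identify the affected cells as $(\{i,j\}\times C)\cup(C\times\{i,j\})$, and then distinguish the cases according to whether $L_{ii}=L_{jj}$ (equivalently $k=2$) or $L_{ii}\neq L_{jj}$ (equivalently $k\ge3$ when $\{i,j\}\subseteq C$). Your derivation of the dichotomy via the explicit chain $L_{ii}\mapsto L_{ij}\mapsto L_{jj}$ under $r_{ij}$ is slightly more explicit than the paper's, but the substance is identical.
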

	
	\begin{proof}
		We first prove that $\{i, j\} \cap C = \emptyset$ or $\{i, j\} \subseteq C$. Suppose that $i \in C$. Then $L_{ji}$ is a symbol in an entry in $\rho$. Since $L_{ij} = L_{ji}$ it follows that $j \in C$ too. So $\{i, j\} \subseteq C$. Similarly if $j \in C$ then $\{i, j\} \subseteq C$.
		
		Let $(x, y) \in S^2$ be such that $L'_{xy} = L'_{yx}$. If $(x, y) \not\in \{i, j\} \times C$ and $(y, x) \not\in \{i, j\} \times C$ then $L'_{xy} = L_{xy} = L_{yx} = L'_{yx}$ since $L$ is symmetric. Henceforth assume that $(x, y) \in \{i, j\} \times C$. Without loss of generality $x=i$. If $(y, x) \not\in \{i, j\} \times C$ then $L'_{xy} = L'_{iy} = L_{jy}$ and $L'_{yx} = L_{yx} = L_{yi}$. Therefore $L_{jy} = L_{yi}$ which is a contradiction since $i \neq j$ and $L$ is symmetric. Finally, consider when $(y, x) \in \{i, j\} \times C$. If $x=y$ then $L'_{xy} = L'_{yx}$ is trivially true, so suppose that $y=j$. Then $L'_{xy} = L_{jj}$ and $L'_{yx} = L_{ii}$. We have shown that a pair $(x, y) \in S^2$ with $x \neq y$ is commuting if and only if $\{(x, y), (y, x)\} \cap (\{i, j\} \times C) = \emptyset$ or both $\{x, y\} = \{i, j\} \subseteq C$ and $L_{ii} = L_{jj}$.
		
		Suppose that $\{i, j\} \cap C = \emptyset$. Then $(x, y) \in S^2$ is a commuting pair if and only if $x=y$ or $\{(x, y), (y, x)\} \cap (\{i, j\} \times C) = \emptyset$. So the set of non-commuting pairs of $L$ is the set $(\{i, j\} \times C) \cup (C \times \{i, j\})$, which has cardinality $4k$.
		
		Now assume that $\{i, j\} \subseteq C$. First suppose that $k=2$. So $L_{ii} = L_{jj}$ and $L_{ij} = L_{ji}$. It follows that $L'_{ij} = L'_{ji}$ and $L'_{ii} = L'_{jj}$. Also, $L'_{xy} = L_{xy}$ for all $\{x, y\} \subseteq S$ with $\{x, y\} \neq \{i, j\}$. Thus $L'$ is symmetric. Now suppose that $k \geq 3$ so that $L_{ii} \neq L_{jj}$. Then $(x, y) \in S^2$ is a commuting pair if and only if $x=y$ or $\{(x, y), (y, x)\} \cap (\{i, j\} \times C) = \emptyset$. So the set of non-commuting pairs of $L$ is the set $(\{i, j\} \times C) \cup (C \times \{i, j\}) \setminus \{(i, i), (j, j)\}$, which has cardinality $4(k-2)+2$.
	\end{proof}
	
	Before we can make use of \lref{l:symmswitch} we need to give some definitions. 
	A \emph{Latin rectangle} is an $n \times m$ matrix, for some positive integers $n$ and $m$ with $n \leq m$, of $m$ symbols such that each symbol occurs at most once in each row and column. An $n \times m$ Latin rectangle $L$ can be \emph{completed} to a Latin square if we can add $m-n$ rows to $L$ to obtain a Latin square of order $m$. The following theorem is due to Bryant and Rodger~\cite{completesymmsquare}.
	
	\begin{thm}\label{t:completesymm}
		Let $n \geq 6$ be an integer. A $2 \times n$ Latin rectangle $R$ can
		be completed to a symmetric Latin square of order $n$ if and only if
		$R_{01}=R_{10}$ and if $n$ is odd then $R_{00}\ne R_{11}$.
	\end{thm}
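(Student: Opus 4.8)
The plan is to dispatch necessity in a couple of lines and then put the real work into sufficiency, which I would recast as an edge-colouring problem.

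\emph{Necessity.} If $L$ is a symmetric Latin square extending $R$, then $R_{01}=L_{01}=L_{10}=R_{10}$. Suppose in addition that $n$ is odd. For any symbol $s$, the off-diagonal cells of $L$ containing $s$ pair up as $\{(i,j),(j,i)\}$, so $s$ occurs an even number of times off the main diagonal of $L$, hence an odd, in particular positive, number of times on its diagonal. Since there are $n$ diagonal cells and $n$ symbols, the diagonal of $L$ is a permutation of the symbol set, so $R_{00}=L_{00}\ne L_{11}=R_{11}$.

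\emph{Reformulating sufficiency.} Now assume $R_{01}=R_{10}$, together with $R_{00}\ne R_{11}$ when $n$ is odd, and write $S$ for the common symbol/row/column set, $|S|=n$. I would use the standard dictionary: a symmetric Latin square $L$ on $S$ is the same as a proper edge-colouring $\chi$ of the complete graph $K_S$ with colour set $S$, where $\chi(\{i,j\})=L_{ij}$ for $i\ne j$; because every vertex has degree $n-1=|S|-1$, exactly one colour is absent at each vertex, namely $L_{ii}$. Prescribing rows $0$ and $1$ of $R$ amounts to prescribing $\chi$ on all edges meeting $\{0,1\}$ and declaring the colour absent at $0$ (resp.\ $1$) to be $R_{00}$ (resp.\ $R_{11}$); the local consistency of this partial data — including that $\chi$ is well defined on $\{0,1\}$ — holds precisely because $R$ is a Latin rectangle with $R_{01}=R_{10}$. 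Deleting vertices $0$ and $1$, the remaining task is to properly edge-colour $K_{S\setminus\{0,1\}}\cong K_{n-2}$ with colours from $S$ so that each vertex $v$ avoids the two (distinct) colours $R_{0v}$ and $R_{1v}$ already present at $v$; equivalently, to colour each edge $\{u,v\}$ from the list $\Lambda_u\cap\Lambda_v$, where $\Lambda_v=S\setminus\{R_{0v},R_{1v}\}$ has size $n-2$.

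\emph{Building the colouring.} The core of the argument is to construct this colouring, which I would do explicitly, splitting on the parity of $n$. For even $n$ the graph $K_{n-2}$ has even order and one engineers the colour classes to mesh with the two precoloured edges at each vertex; for odd $n$ one adjoins a dummy vertex $\infty$ with $\Lambda_\infty=\{R_{00},R_{11}\}$ (a genuine $2$-set, using $R_{00}\ne R_{11}$), which turns the goal into a $1$-factorisation of a complete graph of even order $n-1$ containing a prescribed partial $1$-factor, assembled round by round. A slicker but less self-contained alternative is a Hilton-style amalgamation–detachment argument: view the sought symmetric square as the detachment of the ``outline'' symmetric Latin square obtained by merging vertices $2,\dots,n-1$, and check that the hypotheses on $R$ give exactly the degree and multiplicity conditions of a symmetric detachment theorem. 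Either way one must separately clear the few small orders by hand, which is presumably why $n\ge 6$ is imposed, and the two hypotheses get consumed in verifying the construction never stalls: $R_{01}=R_{10}$ just to start, and for odd $n$ the condition $R_{00}\ne R_{11}$ because vertices $0$ and $1$ must miss distinct colours (the diagonal of the completion being forced to be a permutation).

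\emph{Main obstacle.} The delicate point is exactly the constrained edge-colouring of $K_{n-2}$: each list $\Lambda_v$ has size one more than the maximum degree $n-3$ of $K_{n-2}$, the borderline regime in which no generic list-edge-colouring theorem applies, so the proof must exploit the rigid shape of the lists — each is the full colour set minus one specified pair — and of $K_{n-2}$ itself, rather than invoke a black box. Keeping the absent-colour (diagonal) function compatible with the prescribed values $R_{00}$, $R_{11}$ and with the parity of $n$ is the other place that needs care, and is precisely where the hypotheses of the theorem are used.
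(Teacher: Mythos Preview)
The paper does not prove this theorem at all: it is quoted verbatim as a result of Bryant and Rodger, with a citation and no argument. So there is no ``paper's own proof'' to compare against; the authors simply import the statement as a black box and use it once, in the proof of \lref{l:switch}, to guarantee the existence of a symmetric Latin square of order $n\ge6$ whose first two rows realise a prescribed row cycle.

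Your necessity argument is correct and clean. Your reformulation of sufficiency as a constrained proper edge-colouring of $K_{n-2}$ with lists $\Lambda_v=S\setminus\{R_{0v},R_{1v}\}$ is the right translation, and you have correctly identified why neither Vizing nor any off-the-shelf list-edge-colouring bound suffices. But what you have written is a plan, not a proof: the sentence ``one engineers the colour classes to mesh with the two precoloured edges at each vertex'' is exactly the hard part, and you have not done it. The Bryant--Rodger paper that the present authors cite does carry this out, and the amalgamation/detachment route you mention is indeed close in spirit to how such embedding theorems are typically proved; but absent either an explicit construction or a precise invocation of a detachment theorem with its hypotheses verified, the sufficiency half remains a sketch. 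If your aim is only to match what the paper does, you can simply cite Bryant--Rodger as they do; if your aim is to give a self-contained proof, the constrained colouring step needs to be written out in full.
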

	
	We can use \lref{l:symmswitch} and \tref{t:completesymm} to construct
	Latin squares with high numbers of commuting pairs.
	
	\begin{lem}\label{l:switch}
		If $n \geq 6$ is an integer, then $\{n^2-2a :3\le a\le2n-6\} \subseteq \C(n)$.
	\end{lem}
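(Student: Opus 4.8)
The plan is to realise each value $n^2-2a$ by starting from a symmetric Latin square of order $n$, switching on a single carefully chosen row cycle, and invoking \lref{l:symmswitch}. Since part~(i) of that lemma produces $n^2-4k$ commuting pairs and part~(iii) produces $n^2-4k+6$, I would split into two families according to the parity of $a$: when $a=2k$ is even, aim for a row cycle of length $k$ whose column set is disjoint from $\{i,j\}$; when $a=2k-3$ is odd, aim for a row cycle of length $k\ge3$ whose column set contains $\{i,j\}$. As $k$ ranges over $\{2,3,\dots,n-3\}$ in the first case and over $\{3,4,\dots,n-2\}$ in the second, the values $a=2k$ and $a=2k-3$ run through exactly the even and the odd integers in $\{3,4,\dots,2n-6\}$, so the union of the two families is precisely $\{n^2-2a:3\le a\le 2n-6\}$.

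To produce the required symmetric Latin squares I would use \tref{t:completesymm}. Identify the symbol set with $\{0,1,\dots,n-1\}$, take row $0$ of $L$ to be the identity (so $L_{0c}=c$), and take row $1$ to be $L_{1c}=\sigma(c)$ for a derangement $\sigma$ of $\{0,1,\dots,n-1\}$ with $\sigma(0)=1$ and, when $n$ is odd, $\sigma(1)\ne 0$. Then the $2\times n$ rectangle $R$ formed by these two rows satisfies $R_{01}=R_{10}=1$ and, when $n$ is odd, $R_{00}=0\ne\sigma(1)=R_{11}$, so by \tref{t:completesymm} (and since $n\ge6$) it can be completed to a symmetric Latin square of order $n$. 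Because row $0$ is the identity, $r_{01}=\sigma$, and each cycle of $\sigma$ on a symbol set $D$ is a cycle of $r_{01}$ that gives, for any $c\in D$, a row cycle $\rho(0,1,c)$ of length $|D|$ whose column set is exactly $D$. It therefore remains only to select the cycle type of $\sigma$.

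For even $a=2k$ with $2\le k\le n-3$, let $\sigma$ consist of a $k$-cycle on $k$ symbols chosen from $\{2,\dots,n-1\}$ together with an $(n-k)$-cycle through $0$ and $1$ starting $0\mapsto1$; since $n-k\ge3$ this $(n-k)$-cycle is a derangement with $\sigma(1)\ne0$, so $\sigma$ is a valid choice, and the row cycle arising from the $k$-cycle has length $k$ and column set disjoint from $\{0,1\}$. Switching on it and applying \lref{l:symmswitch}(i) gives a Latin square with $n^2-4k=n^2-2a$ commuting pairs. For odd $a=2k-3$ with $3\le k\le n-2$, instead let $\sigma$ consist of a $k$-cycle through $0$ and $1$ starting $0\mapsto1$ (so $\sigma(1)\ne0$ since $k\ge3$) together with an $(n-k)$-cycle on the remaining $n-k\ge2$ symbols; the row cycle arising from the $k$-cycle has length $k\ge3$ and column set containing $\{0,1\}$, and is not equal to $\{0,1\}$ since $k>2$. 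Hence \lref{l:symmswitch}(iii) applies, and switching produces a Latin square with $n^2-4k+6=n^2-2(2k-3)=n^2-2a$ commuting pairs. Together these handle every $a\in\{3,4,\dots,2n-6\}$.

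There is no deep obstacle here; the point requiring care is to choose the cycle type of $\sigma$ so that it simultaneously meets the hypotheses of \tref{t:completesymm} (namely $\sigma(0)=1$, and $\sigma(1)\ne0$ when $n$ is odd) and produces a row cycle standing in the prescribed relationship to rows $\{0,1\}$. The counting of spare symbols ($n-k\ge3$ in the even case, $n-k\ge2$ in the odd case) and the verification that the two parity families of $k$ exactly cover $3\le a\le2n-6$ are then routine.
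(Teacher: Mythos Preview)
Your proof is correct and follows essentially the same approach as the paper: split into the two parity cases, use \tref{t:completesymm} to produce a symmetric Latin square whose rows $0$ and $1$ have a row cycle of the desired length standing in the desired relation to $\{0,1\}$, then switch and apply \lref{l:symmswitch}. The paper's proof is terser---it simply asserts that suitable $L_1$ and $L_2$ exist by \tref{t:completesymm} without writing down the rectangle---whereas you spell out the construction explicitly by taking row~$0$ to be the identity and row~$1$ to be a derangement $\sigma$ with prescribed cycle type, which makes the verification of the hypotheses of \tref{t:completesymm} and the identification $r_{01}=\sigma$ transparent.
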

	
	\begin{proof}
		Let $k \in \{3, 4, \ldots, n-2\}$. By \tref{t:completesymm} there exists a symmetric Latin square $L_1$ of order $n$ which contains a row cycle $\rho(0, 1, 0)$ of length $k$. Let $L_1'$ be obtained from $L_1$ by switching on this row cycle. Then $L_1'$ has $n^2-4k+6$ commuting pairs by \lref{l:symmswitch}.
		
		Let $k' \in \{2, 3, \ldots, n-3\}$. By \tref{t:completesymm} there exists a symmetric Latin square $L_2$ of order $n$ which contains a row cycle $\rho = \rho(0, 1, 2)$ of length $k'$, and no entry in $\rho$ is in column $0$ or $1$ of $L_2$. Let $L_2'$ be obtained from $L_2$ by switching on $\rho$. Then $L_2'$ has $n^2-4k'$ commuting pairs by \lref{l:symmswitch}.
	\end{proof}
	
	Now that we have described our constructions of quasigroups
	and Latin squares with a known number of commuting pairs, we
	can prove \lref{l:recursive}. It will follow immediately from
	\lref{l:endn} below.  Let $n$ be a positive integer. Let
	$\E(n)$ denote the set of all integers $k$ such that, under
	the assumption that $\C(m) = \DD(m)$ for all positive integers
	$m \leq \lfloor n/2 \rfloor$, then \tref{t:anticomm},
	\cyref{c:embedding}, \lref{l:kconds} or \lref{l:switch} proves
	that $k\in \C(n)$.
	
	\begin{lem}\label{l:endn}
		Let $n \geq 10$ be an integer. Then $\E(n) = \DD(n)$.
	\end{lem}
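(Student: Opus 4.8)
Since $n\notin\{4,5\}$ we have $\DD(n)=\D(n)$, and \lref{l:commvalid} gives $\E(n)\subseteq\C(n)\subseteq\D(n)$, so it suffices to prove the reverse inclusion $\D(n)\subseteq\E(n)$: every $k\in\{n,n+2,\ldots,n^2-6\}\cup\{n^2\}$ lies in $\E(n)$. The plan is to tile $\D(n)$ with a small number of overlapping bands, each supplied by one of the available tools, working throughout under the standing assumption that $\C(m)=\DD(m)$ for all $m\le\lfloor n/2\rfloor$.

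The extreme values are immediate: $n\in\E(n)$ and $n^2\in\E(n)$ by \tref{t:anticomm}, and the top band $\{n^2-4n+12,n^2-4n+14,\ldots,n^2-6\}$ lies in $\E(n)$ by \lref{l:switch}. To cover the interior $\{n+2,n+4,\ldots,n^2-4n+10\}$, I would fix hole sizes $m'$ and $m''$ as close to $n/2$ as the hypotheses permit: $\lceil n/3\rceil\le m'<n/2$ for \cyref{c:embedding}(iii), and $m''\le\lfloor n/2\rfloor$ with $m''$ odd when $n$ is odd for \cyref{c:embedding}(iv) --- concretely $m'=m''=n/2-1$ when $n$ is even, $m'=m''=(n-1)/2$ when $n\equiv3\bmod4$, and $m'=(n-1)/2$, $m''=(n-3)/2$ when $n\equiv1\bmod4$. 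Part (iii) then contributes the values $(2j+1)(n-m')+\ell$ for $0\le j\le s$ and $\ell\in\DD(m')$, where $s=\max\{3m'-n,\,m'-1-\lfloor(n-m'-2)/4\rfloor\}$, and part (iv) contributes $(n+m''-2j)(n-m'')+\ell$ for $2\le j\le m''$ and $\ell\in\DD(m'')$; for large $n$ one can instead fill the middle chunk directly with \lref{l:kconds2} using a single saturated $m$ of size about $\sqrt{2n}$.

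The crux is to show these bands, together with the top band and $\{n,n^2\}$, cover $\D(n)$ with no gaps. For $m'$ near $n/2$ and $n$ not too small one checks $2(n-m')\le(m')^2-m'-4$ and $(m')^2+m'\ge2n$, so the intervals $(2j+1)(n-m')+[m',(m')^2-6]$ in part (iii) overlap and merge into the single progression $\{n,n+2,\ldots,(2s+1)(n-m')+(m')^2-6\}$, while each stray point $(2j+1)(n-m')+(m')^2$ with $j<s$ lands inside the next interval; likewise $(m'')^2+m''\ge2n+4$ forces the part (iv) intervals to merge into a progression running from $(n-m'')^2+m''$ up to $(n+m''-4)(n-m'')+(m'')^2-6$. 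One then verifies the junctions: that the part (iii) progression and its one remaining stray point near $3n^2/4$ reach the bottom of the part (iv) progression up to a single step --- an inequality of the shape $(2s+1)(n-m')+(m')^2\ge(n-m'')^2+m''-2$; that the top of the part (iv) progression and its last stray point near $n^2-2n$ reach the band from \lref{l:switch} --- which reduces to $(n+m''-4)(n-m'')+(m'')^2-6\ge n^2-4n+10$, i.e.\ to $2n\ge$ a constant; and that nothing is lost at the seam with $\{n^2\}$. Each of these is a single explicit polynomial inequality in $n$, valid for all $n\ge N_0$ for some small explicit $N_0$.

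That leaves the finitely many $n$ with $10\le n<N_0$, which I would check individually. The one genuine nuisance here is that for the smallest such $n$ (for instance $n\in\{10,11,12,13\}$) the hole size closest to $n/2$ equals $4$ or $5$, so $\DD(m)\ne\D(m)$ carries the extra omissions from \tref{t:commpairs} and these propagate through the embedding; for each such $n$ one writes down by hand a few further applications of \cyref{c:embedding}(ii)--(iv), \lref{l:kconds}, \lref{l:switch} and \tref{t:anticomm} to patch the missing residues (for example, for $n=11$, parts (iii) and (iv) with $m=5$ together with \lref{l:switch} and \tref{t:anticomm} already exhaust $\D(11)$, and for $n=12$ the value $38$ is obtained from \lref{l:kconds} with $m=6$, $j=5$). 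I expect the junction inequalities of the previous paragraph and this small-$n$ bookkeeping --- in particular pinning down every single value at the seams between consecutive bands, where the stray-$m^2$ structure of $\DD(m)$ meets the arithmetic progressions --- to be the only real work; everything else is a direct appeal to the results of \sref{s:prelim} and the preceding lemmas of this section.
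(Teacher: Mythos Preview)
Your plan is workable but takes a genuinely different route from the paper. The paper handles $10\le n\le 27$ by direct computation of $\E(n)$, and for $n\ge 28$ covers $\D(n)$ with just four overlapping intervals produced by two hole sizes: with $q\in\{n/2,(n-1)/2,(n-3)/2\}$ the largest integer $\le n/2$ of the correct parity and $r$ the largest integer $\le(n-1)/3$ of the correct parity, the bands are $[n,\,n+q^2-q-6]$ from \cyref{c:embedding}(ii), $[q+\beta(n-q),\,n^2-2n-q^2+q+1]$ from \lref{l:kconds2} with $m=q$, $[r+\beta(n-r),\,n^2-2n-r^2+r+1]$ from \lref{l:kconds2} with $m=r$, and $[n^2-q^2+q,\,n^2-6]$ from \cyref{c:embedding}(i). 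Only three junction inequalities then need checking, and each is a one-line rational-function estimate. Your decomposition via \cyref{c:embedding}(iii)--(iv) and \lref{l:switch} also succeeds, but it generates one sub-interval per value of $j$ and correspondingly more bookkeeping with the stray $m^2$ points of $\DD(m)$; the paper's choice of \lref{l:kconds2} as the workhorse absorbs all of that into a single contiguous band per hole size, which is why neither \cyref{c:embedding}(iii)--(iv) nor \lref{l:switch} appears in its proof at all.

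One correction: your parenthetical that ``for large $n$ one can instead fill the middle chunk directly with \lref{l:kconds2} using a single saturated $m$ of size about $\sqrt{2n}$'' does not work. The interval supplied by \lref{l:kconds2} is $[m+\beta(n-m),\,n^2-2n-m^2+m+1]$; since $\beta(j)=2j+1$ for $j\ge5$, with $m\approx\sqrt{2n}$ this is roughly $[2n-m,\,n^2-4n+m]$, whose lower endpoint is still about $2n$, far above the values near $n$ that you need at the bottom. Increasing $m$ lowers both endpoints, so no single $m$ suffices; a second, larger hole size (the paper takes $m=q\approx n/2$, giving a lower endpoint near $3n/2$) is needed to pull the bottom down, which is precisely why the paper uses two values of $m$ in \lref{l:kconds2}.
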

	
	\begin{proof}
		If $n \leq 27$ then it is easy to compute 
		the set $\E(n)$ and verify that it is equal to $\DD(n)$. Henceforth
		we will assume that $n \geq 28$. Let $k \in \DD(n)$. Since a
		commutative quasigroup of order $n$ is known to exist we may assume
		that $k \neq n^2$. Define
		\[
		q = \begin{cases}
			n/2 & \text{if } n \text{ is even}, \\
			(n-1)/2 & \text{if } n \equiv 3 \bmod 4, \\
			(n-3)/2 & \text{if } n \equiv 1 \bmod 4.
		\end{cases}
		\]
		Also define
		\[
		r = \begin{cases}
			\lfloor (n-1)/3 \rfloor & \text{if } n \text{ is even or } \lfloor (n-1)/3 \rfloor \text{ is odd}, \\
			\lfloor (n-1)/3 \rfloor-1 & \text{otherwise}.
		\end{cases}
		\]
		Note that both $r$ and $q$ are odd if $n$ is odd. Also note that
		$4+q-q^2+2n \leq 0$ and $4+r-r^2+2n \leq 0$ since $n \geq 28$. So we
		can apply \lref{l:kconds2} with $m \in \{q, r\}$.
		
		Define functions $f_1 : \mathbb{Z}^2 \setminus \{(a, a-2) : a \in \mathbb{Z}\} \to \mathbb{Z}$ and $f_2 : \mathbb{Z}^2 \to \mathbb{Z}$ by
		$$f_1(a, b) = b+\lfloor (a-b)(2a-2b-3)/(a-b-2) \rfloor\text{ \ and \ }f_2(a, b) = a^2-2a-b^2+b+1.$$
		Let $x_1 = n$ and $x_2 = n+q^2-q-6$.
		Let $x_3 = f_1(n, q)$ and $x_4 = f_2(n, q)$. Let $x_5 = f_1(n, r)$ and $x_6 = f_2(n, r)$. Finally, let $x_7 = n^2-q^2+q$ and $x_8 = n^2-6$. 
		Note that
		\[
		x_2-x_3 \geq n+\left(\frac{n-3}2\right)^2-6-n
		-\frac{\left(n-\frac{n-3}2\right)(2n-(n-3)-3)}{n-\frac n2-2} = \frac{n^3-14n^2-3n+60}{4n-16},
		\]
		which is non-negative since $n \geq 28$. 
		Similarly,
		\[
		x_4-x_5\ge n^2-2n-\frac{n^2}{4}+\frac{n-3}{2}-\frac{n-1}{3}
		-\frac{(n-n/3+2)(2n-2n/3+1)}{n-(n-1)/3-2}
		=\frac{18n^3-121n^2-38n-2}{24n-60}
		\]
		and
		\[
		x_6-x_7\ge-2n-\left(\frac{n-1}{3}\right)^2+\frac n3-1
		+\left(\frac{n-3}{2}\right)^2-\frac n2
		=\frac{5n^2-124n+41}{36},
		\]
		showing that $x_5 \leq x_4$ and $x_7 \leq x_6$, respectively.
		It follows that $x_i \leq k \leq x_{i+1}$ for some $i \in \{1, 3, 5, 7\}$.	
		If $x_1 \leq k \leq x_2$ then $k \in \E(n)$ by \cyref{c:embedding}$(ii)$. If $x_3 \leq k \leq x_4$ then $k \in \E(n)$ by \lref{l:kconds2} with $m=q$. If $x_5 \leq k \leq x_6$ then $k \in \E(n)$ by \lref{l:kconds2} with $m=r$. Finally, if $x_7 \leq k \leq x_8$ then $k \in \E(n)$ by \cyref{c:embedding}$(i)$.
	\end{proof}
	
	\lref{l:recursive} follows from \lref{l:endn}. Examining the proof of
	\lref{l:endn}, it is evident that
	we used the constructions behind
	\tref{t:symmholeeven}, \tref{t:antisymmholenew} and
	\lref{l:kconds}. Other constructions, such as those in
	\lref{l:newcons}, \lref{l:symmpermutehole} and \lref{l:switch}, were
	not used, but will be used to produce base cases in the next
	subsection.
	
	\subsubsection{Base cases}
	
	In this subsection we prove the following lemma.
	
	\begin{lem}\label{l:basecases}
		For all positive integers $n \leq 10$, the statement \eref{e:goal} is true.
	\end{lem}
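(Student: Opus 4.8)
The plan is to argue by induction on $n$, establishing \eref{e:goal} for $n=1,2,\dots,10$ in that order, so that on reaching a value $n$ we may assume $\C(m)=\DD(m)$ for all $m<n$, and in particular for all $m\le\lfloor n/2\rfloor$. Since \lref{l:commvalid} gives $\C(n)\subseteq\D(n)$ in every case, what remains is to prove $\D(n)\subseteq\C(n)$ when $n\notin\{4,5\}$, and to prove $\DD(n)\subseteq\C(n)$ together with $n^2-6\notin\C(n)$ when $n\in\{4,5\}$. For $n\le 3$ this is immediate: a cyclic group realises $n^2$, the first square in \eref{e:antisymm36} realises $3$ when $n=3$, and $\D(1)=\{1\}$, $\D(2)=\{4\}$, $\D(3)=\{3,9\}$ leave nothing further to check.

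For the achievability part when $4\le n\le 10$, the value $n^2$ again comes from a cyclic group and the value $n$ from an anti-commutative quasigroup (\tref{t:anticomm}); the intermediate values I would obtain exactly as in the proof of \lref{l:endn}, by reading off which values \cyref{c:embedding}, \lref{l:kconds} and \lref{l:switch} supply when fed the inductively known data $\C(m)=\DD(m)$ for $m\le\lfloor n/2\rfloor$. For $n=10$ the claim is then immediate from \lref{l:endn} and nothing more is needed. For $n\in\{4,5,6,7,8,9\}$ these general constructions cover all but a short list of the required values --- for $n=4$ only $6$, for $n=5$ the values $\{9,11,15,19\}$, for $n=6$ the values $\{10,14,22\}$, and a similarly modest handful for $n=7,8,9$ --- and I would close each gap with an explicit Latin square of the relevant order, most conveniently produced by switching one or two row cycles in the Cayley table of $\Z_n$ or in a symmetric Latin square of order $n$, as in the proof of \lref{l:switch}. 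Producing and checking these sporadic examples, together with the arithmetic behind the $\E(n)$-style computations for $6\le n\le 9$, is the bulk of the routine labour.

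The step that needs an actual idea is showing $n^2-6\notin\C(n)$ for $n\in\{4,5\}$, and here the key is the structural observation that, in any quasigroup, if $\{x,y\}$ is a non-commuting pair then each of $x$ and $y$ lies in at least two non-commuting pairs: writing $z=y*x\ne x*y$ and letting $w$ be the column in which the symbol $z$ sits in row $x$, one has $w\ne y$, and $\{x,w\}$ must be non-commuting, since otherwise $w*x=x*w=z=y*x$ would place $z$ twice in column $x$. Hence the graph on $Q$ whose edges are the non-commuting pairs has no vertex of degree $1$. If $\C(Q)=n^2-6$ with $n\in\{4,5\}$ this graph has three edges ($n=4$) or four edges ($n=5$), which then forces it to be a triangle together with an isolated vertex, respectively a $4$-cycle together with an isolated vertex. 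Labelling the isolated vertex $0$ and the remaining $n-1$ vertices cyclically along the triangle or $4$-cycle, and running the ``chase the symbol $z$'' argument around every edge of the cycle while using that $0$ commutes with everything, one finds two distinct symbols $p,q$ such that, for each vertex $i$ of the cycle, row $i$ of the Cayley table holds $p$ and $q$ in the two columns indexed by the cycle-neighbours of $i$; consequently $i*0$ lies in $Q\setminus\{p,q\}$ for every such $i$, which is impossible since $|Q\setminus\{p,q\}|=n-2$ while the $n-1$ entries $\{i*0:1\le i\le n-1\}$ of column $0$ are pairwise distinct. This closes the induction. The main obstacle, as indicated, is the finite bookkeeping in the achievability direction rather than this last argument, which is short once the ``no degree-$1$ vertex'' lemma is in place.
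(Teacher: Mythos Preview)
Your proposal is essentially correct and, on the achievability side, matches the paper's approach: compute $\E(n)$ using the recursive tools and then close the finitely many gaps with explicit squares obtained by row-cycle switches. One slip of the pen: you write ``$n^2-6\notin\C(n)$ when $n\in\{4,5\}$'', but for $n=5$ the excluded value is $17=n^2-8$, not $19=n^2-6$ (indeed $19\in\C(5)$). Your edge counts (three for $n=4$, four for $n=5$) show you have the right targets in mind; just correct the statement. A second small caveat: although \lref{l:endn} is stated for $n\ge10$, the paper's own base-case section records $\DD(10)\setminus\E(10)=\{28\}$ and supplies an explicit square for it, so appealing to \lref{l:endn} alone at $n=10$ may not be safe --- be prepared to exhibit the $(10,28)$ example directly.

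Where you genuinely diverge from the paper is in the non-existence step. The paper disposes of $10\notin\C(4)$ and $17\notin\C(5)$ by exhaustive enumeration of all Latin squares of orders $4$ and $5$. Your argument is structural and more informative: the observation that each endpoint of a non-commuting pair lies in a second non-commuting pair (via the ``chase the symbol $z=y*x$ along row $x$'' trick) forces the non-commuting graph to have minimum degree $2$ on its support, hence to be a triangle plus isolated vertex when $n=4$, or a $4$-cycle plus isolated vertex when $n=5$; running the chase around the cycle then yields two symbols $p\ne q$ that already occupy the two neighbour-columns in every cycle-row, so the $n-1$ distinct entries of column $0$ below the isolated vertex are squeezed into $Q\setminus\{p,q\}$ of size $n-2$, a contradiction. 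This is correct, short, and explains \emph{why} these two small exceptions occur rather than merely certifying them by search; it also makes transparent why the obstruction disappears for larger $n$. The paper's brute-force check, by contrast, is opaque but requires no cleverness.
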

	
	Combining \lref{l:basecases} with \lref{l:recursive} proves \tref{t:commpairs}. Clearly, to prove \lref{l:basecases} it suffices to show that for each positive integer $n \leq 10$ and each $k \in \DD(n) \setminus \E(n)$, there is a Latin square of order $n$ which has exactly $k$ commuting pairs.
	
	Let $n \leq 10$ be a positive integer. If $n\le3$ then \eref{e:goal} is
	immediate from \tref{t:anticomm}.
	If $n=4$ then $\DD(4) \setminus \E(4) = \{6\}$.
	The following Latin square of order $4$ has $6$ commuting pairs.
	\[
	\begin{pmatrix}
		2&0&3&1\\
		1&3&2&0\\
		0&2&1&3\\
		3&1&0&2\\
	\end{pmatrix}.
	\]
	It can be verified by checking all Latin squares of order $4$ that
	none have exactly $10$ commuting pairs.
	
	If $n=5$ then $\DD(n) \setminus \E(n) = \{9, 11, 15, 19\}$.
	Let $L$ be the Latin square of order
	$5$ with $19$ commuting pairs in \eref{e:5}. For each $k \in \{9, 11,
	15\}$ we can obtain a Latin square of order $5$ with $k$ commuting
	pairs by switching $L$ on the row cycles in the table of
	\eref{e:5}. If two row cycles are listed then we switch on both
	of these cycles.
	\begin{equation}\label{e:5}
		\begin{minipage}{.45 \textwidth}
			\[
			\begin{pmatrix}
				0&1&3&4&2\\
				1&3&4&2&0\\
				3&0&2&1&4\\
				4&2&1&0&3\\
				2&4&0&3&1\\
			\end{pmatrix}
			\]
		\end{minipage}
		\begin{minipage}{.45 \textwidth}
			\[
			\begin{tabular}{|c|c|}
				\hline
				$k$&row cycles\\
				\hline
				$9$&$\rho(1, 2, 0), \rho(3, 4, 0)$\\
				$11$&$\rho(1, 2, 0), \rho(3, 4, 2)$\\
				$15$&$\rho(1, 2, 0)$\\
				\hline
			\end{tabular}
			\]
		\end{minipage}
	\end{equation}
	It can be verified by checking all Latin squares of order $5$ that none have exactly $17$ commuting pairs.
	
	If $n=6$ then $\DD(n) \setminus \E(n) = \{10, 14, 22\}$. Let $L$ be the Latin square of order $6$ with $22$ commuting pairs in \eref{e:6}. For each $k \in \{10, 14\}$ we can obtain a Latin square of order $6$ with $k$ commuting pairs by switching $L$ on the row cycles in the table of \eref{e:6}.
	\begin{equation}\label{e:6}
		\begin{minipage}{.45 \textwidth}
			\[
			\begin{pmatrix}
				1&2&3&4&5&0\\
				2&3&4&5&0&1\\
				3&4&5&0&1&2\\
				0&5&2&1&4&3\\
				4&0&1&2&3&5\\
				5&1&0&3&2&4\\
			\end{pmatrix}
			\]
		\end{minipage}
		\begin{minipage}{.45 \textwidth}
			\[
			\begin{tabular}{|c|c|}
				\hline
				$k$&row cycles\\
				\hline
				$10$&$\rho(1, 5, 0)$\\
				$14$&$\rho(1, 2, 0), \rho(4, 5, 1)$\\
				\hline
			\end{tabular}
			\]
		\end{minipage}
	\end{equation}
	
	If $n=7$ then $\DD(n) \setminus \E(n) = \{11, 17, 31\}$. Let $L$ be the Latin square of order $7$ with $31$ commuting pairs in \eref{e:7}. For each $k \in \{11, 17\}$ we can obtain a Latin square of order $7$ with $k$ commuting pairs by switching $L$ on the row cycles in the table of \eref{e:7}.
	\begin{equation}\label{e:7}
		\begin{minipage}{.45 \textwidth}
			\[
			\begin{pmatrix}
				0&1&2&3&4&5&6\\
				1&2&0&4&3&6&5\\
				2&0&1&5&6&3&4\\
				3&4&5&6&0&1&2\\
				6&3&4&0&5&2&1\\
				4&5&6&1&2&0&3\\
				5&6&3&2&1&4&0\\
			\end{pmatrix}
			\]
		\end{minipage}
		\begin{minipage}{.45 \textwidth}
			\[
			\begin{tabular}{|c|c|}
				\hline
				$k$&row cycles\\
				\hline
				$11$&$\rho(1, 2, 0), \rho(3, 5, 0)$\\
				$17$&$\rho(3, 5, 0)$\\
				\hline
			\end{tabular}
			\]
		\end{minipage}
	\end{equation}
	
	If $n=8$ then $\DD(n) \setminus \E(n) = \{16, 26, 42\}$. Let $L$ be the Latin square of order $8$ with $42$ commuting pairs in \eref{e:8}. For each $k \in \{16, 26\}$ we can obtain a Latin square of order $8$ with $k$ commuting pairs by switching $L$ on the row cycles in the table of \eref{e:8}.
	\begin{equation}\label{e:8}
		\begin{minipage}{.45 \textwidth}
			\[
			\begin{pmatrix}
				1&2&3&4&5&6&7&0\\
				2&3&4&5&6&7&0&1\\
				3&4&5&6&7&0&1&2\\
				4&5&6&7&0&1&2&3\\
				5&6&7&0&1&2&3&4\\
				0&7&2&1&4&3&6&5\\
				7&0&1&3&2&4&5&6\\
				6&1&0&2&3&5&4&7\\
			\end{pmatrix}
			\]
		\end{minipage}
		\begin{minipage}{.45 \textwidth}
			\[
			\begin{tabular}{|c|c|}
				\hline
				$k$&row cycles\\
				\hline
				$16$&$\rho(1, 2, 0), \rho(3, 5, 1)$\\
				$26$&$\rho(2, 3, 0)$\\
				\hline
			\end{tabular}
			\]
		\end{minipage}
	\end{equation}
	
	If $n=9$ then $\DD(n) \setminus \E(n) = \{17, 25, 35, 37, 47, 49, 53, 55\}$. Let $L$ be the Latin square of order $9$ with $55$ commuting pairs in \eref{e:9}. For each $k \in \{17, 25, 35, 37, 47, 49, 53\}$ we can obtain a Latin square of order $9$ with $k$ commuting pairs by switching $L$ on the row cycles in the table of \eref{e:9}. 
	\begin{equation}\label{e:9}
		\begin{minipage}{.45 \textwidth}
			\[
			\begin{pmatrix}
				1&2&3&4&5&6&7&8&0\\
				2&3&4&5&6&7&8&0&1\\
				3&4&5&6&7&8&0&1&2\\
				4&5&6&7&8&0&1&2&3\\
				5&0&7&8&3&1&2&6&4\\
				0&7&8&3&1&2&6&4&5\\
				8&6&0&1&2&3&4&5&7\\
				7&8&1&2&0&4&5&3&6\\
				6&1&2&0&4&5&3&7&8\\
			\end{pmatrix}
			\]
		\end{minipage}
		\begin{minipage}{.45 \textwidth}
			\[
			\begin{tabular}{|c|c|}
				\hline
				$k$&row cycles\\
				\hline
				$17$&$\rho(1, 2, 0), \rho(3, 4, 0)$\\
				$25$&$\rho(1, 2, 0), \rho(3, 5, 2)$\\
				$35$&$\rho(1, 3, 0)$\\
				$37$&$\rho(1, 6, 0)$\\
				$47$&$\rho(1, 8, 0)$\\
				$49$&$\rho(1, 8, 3)$\\
				$53$&$\rho(1, 8, 1)$\\
				\hline
			\end{tabular}
			\]
		\end{minipage}
	\end{equation}
	
	If $n=10$ then $\DD(n) \setminus \E(n) = \{28\}$. The order $10$ Latin square
	\[
	\begin{pmatrix}
		1&\mk2&\mk3&4&\mk5&6&7&8&9&0\\
		\mk2&3&\mk4&\mk5&\mk6&7&8&9&0&\mk1\\
		\mk3&\mk4&5&6&\mk7&8&9&0&1&2\\
		0&\mk5&2&7&4&9&6&1&8&\mk3\\
		\mk5&\mk6&\mk7&8&9&0&1&2&3&4\\
		9&0&1&2&3&4&5&6&7&8\\
		8&9&0&1&2&3&4&5&6&7\\
		7&8&9&0&1&2&3&4&5&6\\
		6&7&8&9&0&1&2&3&4&5\\
		4&\mk1&6&\mk3&8&5&0&7&2&9\\
	\end{pmatrix}
	\]
	has $28$ commuting pairs. The cells off the main diagonal corresponding to the commuting pairs have been highlighted. We have proven \lref{l:recursive} and thus also \tref{t:commpairs}.
	
	\subsection{Proof of \tref{t:Kq}}
	
	In this section we use \tref{t:commpairs} to prove \tref{t:Kq}.
	
	\begin{proof}[Proof of \tref{t:Kq}]
		Define a set $\mathcal{S}'$ by
		\[
		\mathcal{S}' = \begin{cases}
			\mathcal{S} \setminus \{4\} & \text{if } q=5/8, \\
			\mathcal{S} \setminus \{5\} & \text{if } q = 17/25, \\
			\mathcal{S} & \text{otherwise}.
		\end{cases}
		\]
		We must show that $\K(q) = \mathcal{S}'$. We first prove that $\K(q) \subseteq \mathcal{S}'$. Let $n \in \K(q)$ where $q=a/b$. Then $a/b=m/n^2$ for some $m \in \C(n) \setminus \{n^2\}$. Thus $m=a\gcd(m, n^2)$ and $n^2=b\gcd(m, n^2)$ and so $n$ is of the form $(by)^{1/2}$ for some integer $y$ such that $by$ is a square. Considering prime factorisations reveals that $y = kx^2$ for some integer $x$, where $k$ is the smallest positive integer such that $kb$ is a square. Hence $n^2=x^2kb$ for some integer $x$ and so $m=x^2ka$. Since $m \in \C(n) \setminus \{n^2\}$ we know from \lref{l:commvalid} that $m \equiv n \bmod 2$ and $n \leq m \leq n^2-6$. If $k$ is even or $a \equiv b \bmod 2$ then $x^2ka \equiv x^2kb \bmod 2$ for any $x \in \Z$. Otherwise $x^2ka \equiv x^2kb \bmod 2$ only for even $x \in \Z$. Since $m \geq n$ it follows that $x^2ka \geq x(bk)^{1/2}$ which implies that $x \geq \lceil b^{1/2}/(ak^{1/2}) \rceil$.	Since $m \leq n^2-6$ it follows that $x^2ka \leq x^2kb-6$ which implies that $x \geq \lceil 6^{1/2}/(kb-ka)^{1/2} \rceil$. We have shown that $n \in \mathcal{S}$. To conclude that $n \in \mathcal{S}'$ we note that $n \neq 4$ if $q=5/8$ and $n \neq 5$ if $q = 17/25$, by \tref{t:commpairs}.
		
		We now prove that $\mathcal{S}' \subseteq \K(q)$. Let $n \in \mathcal{S}'$. We can write $n^2 = x^2kb$ for some $x \in \Z$. Let $m = x^2ka$. The definition of the set $\mathcal{S}'$ implies that that $m \in \C(n)$ and thus $n \in \K(q)$. So $\K(q) = \mathcal{S}'$, as required.	
	\end{proof}
	
	\section*{Acknowledgements}
	
	The authors are grateful to Peter Cameron and Rosemary Bailey for
	suggesting this research topic.
	
	\printbibliography
	
\end{document}